\newdimen\dummy
\oddsidemargin \addtolength{\dummy}{72pt}
\newtheorem{theorem}{Theorem}
\newtheorem{corollary}[theorem]{Corollary}
\newtheorem{remark}{Remark}
\newcommand{\norm}[1]{\left\Vert#1\right\Vert}
\newcommand{\inpd}[2]{\left\langle #1, #2 \right\rangle}
\newcommand{\Real}{\mathbb {R}}
\newcommand{\eye}{\mbox{I}}
\newcommand{\tr}{\textsf{T}}
\newcommand{\Plane}{\mathcal{P}}
\newcommand{\mfd}{\mathcal{M}}
\newcommand{\ts}{\mathcal{T}}
\newcommand{\argmin}{ \operatornamewithlimits{argmin} }
\newcommand{\gad}{gentlest ascent dynamics }
\g@addto@macro{\endabstract}{\@setabstract}
\newcommand{\authorfootnotes}{\renewcommand\thefootnote{\@fnsymbol\c@footnote}}%
\begin{document}
\title{ }

\begin{center}
  \LARGE 
An Iterative Minimization Formulation for Saddle-Point  Search \par \bigskip

  \normalsize
  \authorfootnotes
  Weiguo Gao\footnote{email: {\it wggao@fudan.edu.cn.} }\textsuperscript{1,2},  
 Jing Leng\textsuperscript{1},
  Xiang Zhou\footnote{email: {\it xiang.zhou@cityu.edu.hk}. Corresponding address: 
  Department of mathematics, City University of Hong Kong, Tat Chee Ave, Kowloon, Hong Kong SAR.
   }\textsuperscript{3} \par \bigskip

  \textsuperscript{1}School of Mathematical Sciences,  Fudan University,  Shanghai, 200433, China. \par
  \textsuperscript{2}MOE Key Laboratory of Computational Physical Sciences,  Fudan University,  Shanghai, 200433,  China. \par
  \textsuperscript{3}Department of Mathematics,
        City  University of Hong Kong,  \par Tat Chee Ave, Kowloon,  Hong Kong.\par \bigskip

  \today
\end{center}

% \subjclass[2000]{Primary } 
%    For articles to be published after 1 January 2010, you may use
%    the following version:

%\keywords{transition path}

\date{}

\maketitle

\date{}
\maketitle

\section*{Abstract}
This paper proposes  and analyzes  an  iterative minimization formulation for searching  index-1 saddle points of an energy function.
This formulation differs from other eigenvector-following methods by  constructing 
a new objective function near the guess at each iteration step.
This leads to a quadratic convergence rate,  
in comparison to the linear case of the gentlest ascent dynamics 
(E and Zhou, nonlinearity, vol 24, p1831, 2011) and many other existing methods.
We also propose  the generalization of the new methodology for saddle points of higher index and for constrained energy functions  on  manifold.

%\tableofcontents
\

{{\bf Keywords}: saddle point, energy landscape, eigenvector-following, gentlest ascent dynamics, iterative minimization 
}
\

{{\bf  Mathematics Subject Classification (2010)}  Primary 65K05, Secondary  82B05	  }
\section{Introduction }

There have been considerable attentions for a long time to numerical methods of 
searching local minima of a continuous nonlinear  function. The widespread availability of the efficient optimization 
algorithms for large scale problems  has greatly assisted  the numerical studies of  theoretical  physics, chemistry  and biology.
In computational chemistry, for example,  it is of great interest to   look for metastable states of molecular configurations, which correspond to local minima of 
an energy function. 
Normally,  the traditional optimization procedures are very successful at locating a nearby metastable state.
 However, 
of more interest is the transition states in these molecular systems, 
which are the saddle points of the energy function.
When it comes to the location of the 
transitions states,  the minimization approach leaves a lot to be desired.

Transition states are characterized as stationary points having one, and only one, negative Hessian eigenvalues (e.g.  see \cite{Energylanscapes}).
This type of saddle points is usually named as  the index-1 saddle points.
%Although the transition states  satisfy  the first order optimality condition that the force term (the gradient of the energy function)  is zero, 
%the standard root-finding methods such as Newton or quasi-Newton methods, usually are   not preferred.
%Despite of the fast convergence, these root-finding methods  suffer from the   significant sensitivity  to the initial guess and would converge to any stationary points near the initial guess.
%Nevertheless,  these root-finding methods are used  in the need of further refinement of   the accuracy when a good initial guess 
%has already been prepared from other subroutines.  
There have already been a various   number of advanced  algorithms   during the past decades and
have proved more efficiently in searching saddles for many practical problems in chemistry and material sciences.
The  contributions include, but not limited to,  the following list: 
the activation-relaxation techniques\cite{ART1998}, the dimer method \cite{Dimer1999}, the nudged elastic band method\cite{NEB1998},  
and the  string method\cite{String2002, String2007,Ren2013}.
Interested readers can refer to  the reference \cite{Henkelman2000}.
The first two methods in the above list are the examples of ``single-state''  (or  ``surface walker'') algorithms  and the last two 
are examples of ``chain-of-state'' algorithms.
The ``single-state'' algorithms mainly follow the ``eigenvector-following'' methodology 
\cite{Crippen1971,cerjan1981,Energylanscapes} --- the system is moved uphill along the  eigenvector   (``min-mode'') corresponding to the smallest eigenvalue
of  the Hessian matrix .  Therefore,  these methods drive the system away from the local minimum 
and push it to some index-1 saddle point if the convergence is achieved. 
Numerous  applications to practical problems  have shown that  these ``eigenvector-following'' -type (or ``min-mode'') methods 
generally have a much larger attraction domain for convergence to index-1 saddle points  than the 
 root-finding methods. In addition,  the specificity of selecting index-1 saddles renders these methods more favorable 
than  the root-finding methods.  One more benefit of using ``eigenvector-following''  ideas 
over the Newton's root-finding method 
is that the explicit information of Hessian matrix is usually not required in numerical implementation.

Recently, there are rising mathematical interests in writing  the  ``eigenvector-following'' methodology  \cite{Crippen1971}  in 
  the form of a  dynamical system. For instance,  one of the authors 
  have proposed  the ``gentlest ascent dynamics''  (GAD)\cite{GAD2011,SamantaGAD},
  which  is  a coupled dynamical system of both  a position variable
 and a direction variable. 
 A different but similar  dynamical system  for  the dimer method\cite{Dimer1999}
 is further pursued  in \cite{DuSIAM2012}  by introducing one more dimer length variable.
 
 In GAD, the dynamics flow is defined on the product space of the position in the configuration space and  the direction 
 in its tangent space.
 The position variable 
 describes  the escape trajectory  from the basins of attraction of the local minima.
  The direction variable in GAD simultaneously evolves to try to follow the min-mode
 of the Hessian matrix, although it  does not have to be exactly the min-mode at any time.  It is  proved  that the stable equilibrium points  of  
this dynamical system are  the index-1 saddle points  of the energy function while 
the local minima of the energy function are turned into the index-1 saddle points of GAD.
This interesting property  invites one to attempt  to think of  GAD as a counterpart of 
the very basic  steepest descent dynamics (SDD), which   converges to a  local minimum as time goes to infinity.
 GAD and SDD are both the simplest  flow based on the gradient of the energy function in the configuration space
 and the convergence rates  are both linear for these two methods.
 %, although GAD includes the information of 
 %directions related to the Hessian matrix.

SDD is closely related the steepest descent method,  the simplest
gradient method for unconstrained optimization, which  can be traced back to Cauchy \cite{CauchySD}.
The  analogy between SDD and  GAD  is  a tentative attempt to compare 
the framework of various optimization algorithms  and that of  the saddle search algorithms. 
It is well known that the steepest descent method is ineffective for unconstrained optimization because of its slow convergence rate. Historically, 
many  better alternative optimization techniques have been developed  to achieve   superlinear or quadratic convergence rate,
for instance,   Newton's method, L-BFGS,    nonlinear conjugate gradient method,  and so on \cite{NOCEDAL1999}.
We are interested to  ask what could be  the possible analogues of these advanced optimization methods in the context of  
saddle search problem   and how to improve the linear convergence of  the GAD as well as   other popular saddle search algorithms.

Our motivation is  thus to follow the above mentality   and includes the following two-fold tasks.  Firstly,  we want to present a new mathematical framework with connection to  some optimization problem,
rather than in form of  a dynamical system,  with the hope that the GAD is a natural ``gradient flow'' of the associated optimization problem.  
Secondly,  the new formulation should be able to provide a super linear or quadratic convergence rate and carry  more  flexibility in designing 
faster algorithms. This paper  focuses on the first goal in  theoretical aspect and a partial discussion of the second goal with preliminary numerical experiments.
 The full discussion of developing faster numerical methods and  applying to real problems  
will be  presented  in a separate  article.

The formulation  we propose in this note is an iterative 
minimization scheme. At each iteration,  a new  objective function is constructed based on the given energy function
by using the information of  the current values of the position and the eigenvector of the
smallest eigenvalue.
Then a local minimizer  of this objective function is assigned to  
the new value of the position at the next iteration.  
This iterative scheme can be completely described by a continuously differentiable mapping.
We discover that the Jacobian matrix of this mapping vanishes at the saddle point and 
it follows that  the convergence rate  of the iterative minimization scheme is quadratic.

The authors notice that 
a few variant  techniques    have been proposed in efforts to improve  the  efficiency  of the ``single-state''-type algorithms for saddle search, such as \cite{HBK2005,KS2008, Cances2009,LOR2013}.
However,  all these methods either improve  the rotation step of solving the eigenvector or improve   the translation step of moving the position
in    {\it the} dimension along  the obtained direction.
The  resulting overall effect on the errors  actually still only has the linear convergence rate in the configuration space.

%A similar but different  saddle point problem  appears from  the continuous minimax problem   $\min_x\max_y f(x,y)$  in operation research
%and decision theory \cite{Rustem2009}. But the assumption there  is  usually that $f$ is convex in $x$ and concave in $y$.
%However,  the energy function $V(x)$ of our interest  has no such explicit separation of variables. 

The rest of this paper is  organized as follows.  We first briefly review the gentlest ascent dynamics in Section 2. 
In Section 3, we formulate   our iterative minimization scheme
for index-1 saddles and analyze the convergence rate.   
We also discuss the situation with constraints for saddle points in Section 4.
The generalization for index-$m$ ($m>1$)  saddles are discussed in Section 5. 
Several  numerical  examples are presented in Section 6 to illustrate our theory.
Section 7 is the concluding remarks.

\section{Review of  gentlest ascent dynamics (GAD)}

The gentlest ascent dynamics 
is a mathematical model in form of the dynamical system to 
describe the escape of the basin of attractions in the gentlest way and the convergence to a saddle.
Given a smooth energy function  $V$ on the configuration space, say $\Real^d$,  the gentlest ascent dynamics  is the following dynamic system
defined on the phase space $\Real^d \times \Real^d$
\begin{subnumcases}{\label{GAD-grad}}
 \dot{x}  = - \nabla V(x) + 2\frac{\inpd{\nabla V(x)}{ v}}{\inpd{v}{v}} v,  \label{eqn:GADx} \\
 \gamma \dot{v}  = - \nabla^2 V(x)v + \frac{\inpd{v}{ \nabla^2 V(x) v}}{\inpd{v}{v}} v  .\label{eqn:GADv}
\end{subnumcases}
Here $\inpd{\cdot}{\cdot}$ is the dot product in the Euclidean space  $\Real^d$ and the relaxation constant $\gamma$ can be any positive real number.
 The second equation  (\ref{eqn:GADv}) attempts to find the direction that corresponds to the smallest eigenvalue of the Hessian matrix $\nabla^2V(x)$.
 The second term is to impose the normalization condition that $\norm{v}=\sqrt{\inpd{v}{v}}=1$.
 The last term in the first equation \eqref{eqn:GADx} reverses the component of the gradient force in  the direction $v$
 to drive the system uphill in the direction of $v$.

 It is shown in \cite{GAD2011} that the saddle point  of the original function $V$ is the stable equilibrium  point of the GAD.  
We  recall this result   in the following proposition for convenience. 

\bigskip
\noindent
{\bf Proposition.}
Assume that the energy function $V$ is $\mathcal{C}^4(\Real^{d}; \Real)$. 
\begin{enumerate}[(a)]
\item  If $(x_{*},v_{*})$ is an equilibrium point of the \gad (\ref{GAD-grad}) and  
$\norm{v_*}=1$, then 
$v_*$  is an eigenvector of $\nabla^2 V(x_{*})$ corresponding to one eigenvalue $\lambda_{*}$, 
and $x_{*}$ is an  equilibrium point of the steepest descent dynamics  of $V$, i.e., $\nabla V(x_{*})=0$.

\item Suppose that  $x_s$ is a stationary point of $V$, i.e., $\nabla V(x_s)=0$. 
Let $v_1,v_2, \cdots, v_d$ be the normalized eigenvectors of 
the Hessian $\nabla^2 V(x_s)$,  and the associated eigenvalues be
$\lambda_1,\lambda_2,\cdots, \lambda_d$, respectively.
Then 
for all $i=1,\cdots,d$,  $(x_s, v_i)$ is an equilibrium point of the \gad \eqref{GAD-grad}.
Furthermore, among these $d$  equilibrium points, there exists
one  pair $(x_s,v_{i'})$ which is linearly stable for  \gad \eqref{GAD-grad}, 
 if and only if $x_s$ is an index-$1$ saddle point  of the function $V$,
 or equivalently,  the eigenvalue $\lambda_{i'}$ corresponding to $v_{i'}$
 is the only negative eigenvalue of $\nabla^2 V(x_s)$.

\end{enumerate}

 \bigskip

When the GAD converges,
$v$ corresponds to the smallest  eigenvalue of the Hessian $H(x)\triangleq \nabla^2 V(x)$ at the saddle point
$x_{s}$. Actually, for any frozen  $x$ in the equation   \eqref{eqn:GADv}, 
the  steady state of  the solution $v(t)$ solves the following minimization problem for Rayleigh quotient
\begin{equation}
\label{eqn:minv}
\min_{\|u\|=1} {u}^\tr{H(x) u},
\end{equation}
and the equation \eqref{eqn:GADv} is just a steepest descent dynamics (rescaled in time by $\gamma$)  for the minimization problem \eqref{eqn:minv}.
In the limit of $\gamma\to 0$, $v(t)$ approaches the eigenvector of the smallest eigenvalue instantly, 
and GAD is reduced to traditional ``eigenvector-following'' methodology. 
In this case,  $v$  can be viewed   as a function  $v(x)$.
For finite $\gamma$, the equation \eqref{GAD-grad} couples the dynamics of $x$ and $v$ simultaneously
and still preserves the convergence to  saddle points.

In contrast to the flow for $v$, 
the dynamics equation \eqref{eqn:GADx} for the position $x$, however, is {\it not} in the form of
the steepest descent dynamics of some  scalar function.
 To see this, denoting   the  GAD force as $F(x)$:
\[F_{i}(x) \triangleq f_{i}(x) - 2\sum_{k}v_{i} f_{k}(x)v_{k} \]
where  $f(x)\triangleq -\nabla V(x)$. It is easy to see that 
$\frac{\partial F_{i}}{\partial x_{j}} = -H_{ij}+ 2 v_{i} \sum_{k} H_{kj}v_{k}=-H+2vv^\tr H$
while its transpose 
$\frac{\partial F_{j}}{\partial x_{i}} =-H+2Hvv^\tr.$
The necessary condition for the dynamics of $x$ being of a gradient type is that the Hessian $H$ commutes with the rank-$1$ matrix
$vv^\tr$, which  generally does not hold since $v$ may not be the exact  eigenvector of $H$ in GAD. 
Even in the $\gamma \to 0$ limit where $v=v(x)$ is indeed the eigenvector of $H(x)$,  the Jacobian matrix 
for $F_{i}(x) = f_{i}(x) - 2\sum_{k}v_{i}(x) f_{k}(x)v_{k}(x) $ is still not symmetric.
 
Therefore,  the GAD is not as simple as a steepest descent flow 
and no  underlying energy function to drive this dynamics.  
In  the next section, we shall show that  the GAD can be approximated by  a steepest descent flow of a new  
objective function which is  locally constructed. This leads to an iterative minimization formulation.

\section{The Iterative minimization formulation }

In this section, we discuss how to define a new objective function to drive the system
toward an index-1 saddle point of the original energy function. 
The intuitive idea   is to  change the sign of the energy function $V$ along some direction, rather than reversing  the
direction of the force as in  the GAD. The resulting Hessian then changes the sign of the smallest eigenvalue
while keeps the other eigenvalues  the same. 
%So, this Hessian matrix is definite-positive
%at the saddle of the energy function.

\subsection{The iterative scheme}
\label{ssec:IS}

The framework we start with is  the following   iterative expression 
\begin{subnumcases}{\label{IMF}}
v^{(k+1)} =\underset{\norm{u}=1} {\argmin} \  u^{\tr}H(x^{(k)})u,\\
x^{(k+1)} =\underset{y}{\argmin} \ \left (  V(y) +  W^{(k)}(y)\right), 
\end{subnumcases}
where $W^{(k)}$ is an unknown function to be determined.  We need construct $W^{(k)}$ such  that $x^{(k)}$ 
 converges to a saddle point of $V$.

The following two choices of the function $W^{(k)}$  serve our purpose.
\begin{equation}
\label{eqn:Wk}
W_1^{(k)} (y) = W_1 (y; x^{(k)}, v^{(k+1)}) , 
\qquad
W_2^{(k)} (y) = W_2 (y; x^{(k)}, v^{(k+1)}) , 
\end{equation}
where with the abuse of the notation,  we  define 
\begin{align}
 W_1 (y; x, v)  &\triangleq  -2V(y)+ 2 V\left(y- v v^\tr (y-x)\right),  \label{eqn:W1}\\
 W_2 (y; x, v)  &\triangleq -2V\left(x+ vv^\tr (y-x)\right ).  \label{eqn:W2}
\end{align}
which are  two $\Real^d \to \Real$  functions parameterized by the position $x$ and the  normalized direction $v$.
Therefore,  the new objective function $V+W$ depends
on the current position $x$ and the direction $v$. In equation  \eqref{eqn:Wk}  for the choice of $W^{(k)}$,  the  direction $v^{(k+1)}$
is computed from the given $x^{(k)}$ via the Hessian matrix.
Therefore, the equation \eqref{IMF} is actually an iterative scheme mapping $x^{(k)}$ to $x^{(k+1)}$.

Given a position $x$ and a   direction $v$, 
we then have an  affine hyperplane, denoting as $\Plane_{x,v}$,    through $x$  with the normal $v$, i.e., 
$\Plane_{x,v} = \{y: (y-x)^{\tr}v=0\}$.  
Introduce the projection matrix  $\Pi_v$ and $\Pi_v^\perp=\eye-\Pi_v$, where
\[ \Pi_v u = vv^\tr u.\] 
Then,  the  argument in the second term of  $W_1$  is the point 
\[ y- v v^\tr (y-x) = x+ \Pi^\perp_v (y-x),\]
which  is  the projection of the point $y$ on the affine hyperplane   $\Plane_{x,v}$.
The position in $W_2$,  $x+vv^\tr(y-x)=x+\Pi_v (y-x)$, is the projection on the ray at $x$ with the direction $v$.
%Figure \ref{fig:iLL_PP} illustrates the relations of these points.
%
%\begin{figure}[htbp]
%\begin{center}
%\includegraphics[scale=0.3]{iLL_PP.pdf}
%\caption{default}
%\label{fig:iLL_PP}
%\end{center}
%\end{figure}

The intuition of  the definitions of $W_1$ and $W_2$ is the following.
If $y$ lies on the ray  along $v$, then $W_2=-2V(y)$.
Consequently, 
the  new energy  function
$V(y)+W_2(y; x,v)$ is to modify the potential $V(y)$ 
by reversing the sign of $V$  in the direction $v$.
For the choice of $V(y)+W_1(y; x,v)$, which is equal to $-V(y)+2V(x+\Pi^\perp(y-x))$,
can viewed as the reverse of the sign of $-V$ (instead of $V$) on the  $d-1$ dimensional affine plane $\Plane_{x,v}$.
Let us take a simple example of the  quadratic function $V(y)=\frac12 \sum_{i=1}^d \mu_i y_i^2 $
where $\mu_1<0<\mu_2<\cdots<\mu_d$.  The zero  vector is the index-1 saddle point of $V$.
$v=(1,0,\cdots,0)$ is  the eigenvector corresponding to the smallest eigenvalue $\mu_1$.
Then, 
\begin{align*}
 V(y)+W_1(y;x,v) &= \mu_1 x_1^2  - \frac12 \mu_1 y_1 ^2 + \frac12 \sum_{i=2}^d\mu_i y_i^2 ,
 \\
 V(y)+W_2(y;x,v) &= -\sum_{i=2}^d \mu_i x_i^2  - \frac12 \mu_1 y_1 ^2 + \frac12 \sum_{i=2}^d\mu_i y_i^2 .
\end{align*}
The difference of the above two functions  of $y$ is just a constant $2V(x)$.
Both of them are the convex quadratic functions of $y$ and they share the same Hessian matrix
$\mbox{diag}\{-\mu_1,\mu_2,\cdots,\mu_d\}$ and  the same minimizer  $0$, which is exactly the saddle point of $V$. 
So for any initial position $x^{(0)}$,  the next iteration $x^{(1)}$ 
is the true solution.
%

%
%
%The various traditional implementations of the eigenvector-following methodology work as a  split scheme of 
%rotation--translation\cite{ART1998,Dimer1999}. Rotation step is to find the minimum mode direction and the translation step
%is to move along  that direction.  In our current iterative minimizing method,  there is no  translation step, but allow the position
%to be freely move rather  than confined to the one dimensional direction.   Although there is improvement work  of Newton-type 
%for the translation step in \cite{Cances2009} to achieve the quadratic convergence along the one dimensional direction,
%our convergence result holds  for the Euclidean distance in $\Real^d$.
%   \com{this is new para. I added as a remark}
%

\subsection{Convergence result}

We have formulated the saddle search problem as a fixed point problem in   the iterative scheme   \eqref{IMF}
together with the defined $W_1$ and $W_2$ in \eqref{eqn:W1} and \eqref{eqn:W2}.
In fact,  the function $W^{(k)}$ in the iterative scheme \eqref{IMF} 
can be some linear combination of $W_1$ and $W_2$ to   achieve our purpose, too. In addition,
the constant $2$ showing in $W_1$ and $W_2$ can be relaxed.
In the next,  we shall consider this general case
to  define  the mapping from $x^{(k)}$ to $x^{(k+1)}$.
Denote this mapping for the iteration as $\Phi(x)$. 
We shall show that the Jacobian matrix  of $\Phi$ vanishes at the index-1 saddle point.
This implies the iterative  scheme is of quadratic convergence. 

\bigskip

\begin{theorem}
\label{thm:main}
Assume that $V(x)\in \mathcal{C}^3(\Real^d;\Real)$.
For each $x$, let  $v(x)$  be the normalized eigenvector corresponding to the smallest eigenvalue of the Hessian matrix $H(x)=\nabla^2 V(x)$, i.e., 
\[v(x)=\underset{u\in \Real^d, \, \| u\|=1}\argmin u^\tr H(x)u.\]
Given  two real numbers $\alpha$ and $\beta$ satisfying $\alpha+\beta > 1$,  
we define the following function of the variable $y$,
\begin{equation}
\label{eqn:new_obj}
\begin{split}
L(y ; x, \alpha,\beta) 
=  
 (1-\alpha)V(y) +\alpha V\bigg(y-v(x)v(x)^\tr(y-x)\bigg) 
          \\
          - \beta  V \bigg( x+v(x)v(x)^\tr(y-x)\bigg).
\end{split}
\end{equation}
Suppose that $x^*$ is an index-1 saddle point of the  function $V(x)$, i.e,
$\nabla V(x^*)$ has only one negative eigenvalue $\lambda(x^*)$.
Then the following statements are true.

\begin{enumerate}
\item[(i)] 
   $x^*$ is  a local  minimizer of $L(y ; x^*, \alpha,\beta) $.
\item[(ii)]
There exists a   neighborhood $\mathcal{U}$ of $x^*$ such that for any  $x\in \mathcal{U}$,    
$L(y; x,\alpha,\beta)$ is strictly convex  in $y \in    \mathcal{U}$
and thus has a unique minimum in $\mathcal{U}$.
\item[(iii)]  
Define the mapping    $\Phi :   x\in \mathcal{U} \to \Phi(x) \in  \mathcal{U} $ 
 to be the unique local minimizer  of $L$ in $\mathcal{U}$ for any $x\in \mathcal{U}$. 
 Further assume that  $\mathcal{U}$ contains   no other stationary point of $V$ except $x^*$.
   Then 
 the mapping 
  $\Phi$ has only one  fixed point $x^*$ .
   
  \item[(iv)] $\Phi(x)$  is differentiable  in $\mathcal{U}$.
  The derivative of $\Phi$ vanishes at $x^*$, i.e.,  the Jacobi matrix 
\begin{equation}
 \Phi_x (x^*)=0.
\end{equation}

\end{enumerate}
 \end{theorem}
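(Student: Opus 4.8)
The plan is to reduce all four claims to the behavior of the $y$-gradient and $y$-Hessian of $L$ on the diagonal point $(x^*,x^*)$, using the key observation that $L$ depends on the eigenvector $v(x)$ only through the rank-one projection $P(x):=v(x)v(x)^\tr$, so the sign ambiguity of $v$ is irrelevant. Writing $P^\perp(x):=\eye-P(x)$, the two shifted arguments are $a=a(y,x):=y-P(x)(y-x)$ and $b=b(y,x):=x+P(x)(y-x)$, and a direct chain-rule computation (treating $P(x)$ as frozen for $y$-derivatives, since $\partial a/\partial y=P^\perp(x)$ and $\partial b/\partial y=P(x)$) gives
\[ \nabla_y L = (1-\alpha)\nabla V(y) + \alpha\, P^\perp(x)\nabla V(a) - \beta\, P(x)\nabla V(b), \]
\[ \nabla_y^2 L = (1-\alpha)H(y) + \alpha\, P^\perp(x)H(a)P^\perp(x) - \beta\, P(x)H(b)P(x). \]
Since $x^*$ is an index-1 saddle, the smallest eigenvalue $\lambda_1:=\lambda(x^*)<0$ of $H^*:=H(x^*)$ is the unique negative one with the remaining $\lambda_2\le\cdots\le\lambda_d$ positive; in particular $\lambda_1<\lambda_2$ is simple, so by standard simple-eigenvalue perturbation theory $P(x)$ is $\mathcal{C}^1$ near $x^*$. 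This regularity is what makes $\nabla_y L$ a $\mathcal{C}^1$ map of $(y,x)$ and legitimizes all differentiations below.

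For (i) and (ii) I would evaluate the Hessian at $y=x=x^*$, where $a=b=x^*$ so the three Hessians all equal $H^*$. Diagonalizing in the orthonormal eigenbasis $v^*=v_1,v_2,\dots,v_d$ of $H^*$ (in which $P^*:=P(x^*)$, $P^{*\perp}$ and $H^*$ are simultaneously diagonal), the matrix $\nabla_y^2 L(x^*;x^*)$ becomes diagonal with the single entry $(1-\alpha-\beta)\lambda_1$ in the $v^*$-slot and $\lambda_i$ for $i\ge2$. Because $\alpha+\beta>1$ forces $1-\alpha-\beta<0$ and $\lambda_1<0$, the first entry is positive, and the others are positive by index-1; hence $\nabla_y^2 L(x^*;x^*)$ is positive definite. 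Together with $\nabla_y L(x^*;x^*)=0$ (all three gradient terms vanish since $\nabla V(x^*)=0$), this shows $x^*$ is a strict local minimizer, giving (i). Continuity of $(y,x)\mapsto\nabla_y^2 L$, which uses continuity of $P(x)$, propagates positive definiteness to a neighborhood $\mathcal{U}$, yielding strict convexity and a unique interior minimizer on $\mathcal{U}$ after shrinking, which is (ii).

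For (iii) I would restrict the $y$-gradient to the diagonal: at $y=x$ we have $a=b=x$, and using $P^\perp=\eye-P$ the formula collapses to $\nabla_y L(x;x)=\big(\eye-(\alpha+\beta)P(x)\big)\nabla V(x)$. A fixed point of $\Phi$ is exactly a zero of this. Applying $P^\perp(x)$ and $P(x)$ separately, the former gives $P^\perp(x)\nabla V(x)=0$, while the latter gives $\big(1-(\alpha+\beta)\big)P(x)\nabla V(x)=0$; as $1-(\alpha+\beta)\neq0$ this forces $P(x)\nabla V(x)=0$, so $\nabla V(x)=0$ and $x$ is a stationary point of $V$, hence $x=x^*$ by the no-other-stationary-point hypothesis. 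Conversely $\nabla V(x^*)=0$ makes $x^*$ the (unique, by strict convexity) minimizer, so $x^*$ is the only fixed point.

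Finally, for (iv), I would set $G(y,x):=\nabla_y L(y;x)$, note $G(x^*,x^*)=0$ and that $G_y(x^*,x^*)$ is the positive-definite Hessian above, and invoke the implicit function theorem to conclude $\Phi$ is $\mathcal{C}^1$ with $\Phi_x=-G_y^{-1}G_x$. The crux is to show $G_x(x^*,x^*)=0$. Differentiating $G$ in $x$ in a direction $h$ produces two kinds of terms: those hitting $\partial_x P$, which are multiplied by $\nabla V(a)$ or $\nabla V(b)$ and hence vanish at $(x^*,x^*)$ because $\nabla V(x^*)=0$; and chain-rule terms through $a,b$, which after computing the on-diagonal derivatives $\partial_x a\cdot h=P^*h$ and $\partial_x b\cdot h=P^{*\perp}h$ reduce to $\alpha\,P^{*\perp}H^*P^*h-\beta\,P^*H^*P^{*\perp}h$. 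Both terms vanish because $v^*$ is an eigenvector of $H^*$, i.e. $H^*$ commutes with $P^*$, so $P^{*\perp}H^*P^*=P^{*\perp}P^*H^*=0$ and $P^*H^*P^{*\perp}=H^*P^*P^{*\perp}=0$. Thus $G_x(x^*,x^*)=0$ and $\Phi_x(x^*)=0$. I expect the main obstacle to be the regularity bookkeeping rather than the algebra: one must justify that $P(x)$ is $\mathcal{C}^1$ so that the implicit function theorem and $G_x$ are meaningful, and carefully isolate the vanishing $\partial_x P$ contributions from the chain terms; once $\nabla V(x^*)=0$ and the commutation $H^*P^*=P^*H^*$ are in hand, the cancellation is immediate.
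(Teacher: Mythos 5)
Your proposal is correct and follows essentially the same route as the paper: the identical gradient and Hessian formulas for $L$, positive definiteness of $\nabla_y^2 L(x^*;x^*)=H(x^*)-(\alpha+\beta)\lambda(x^*)v(x^*)v(x^*)^\tr$ under $\alpha+\beta>1$ for (i)--(ii), the diagonal collapse $\nabla_y L(x;x)=\big(\eye-(\alpha+\beta)v(x)v(x)^\tr\big)\nabla V(x)$ for (iii), and for (iv) the implicit function theorem combined with $\nabla V(x^*)=0$ (which kills the eigenvector-derivative terms, the paper's $J$ terms) and the commutation $H(x^*)P(x^*)=P(x^*)H(x^*)$. Your packaging of (iv) as $\Phi_x=-G_y^{-1}G_x$ with $G_x(x^*,x^*)=0$ is merely tidier bookkeeping of the paper's direct differentiation of the fixed-point identity, and your observation that $L$ depends on $v$ only through $P=vv^\tr$ is a nice explicit disposal of the sign ambiguity that the paper leaves implicit.
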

\bigskip 
\begin{proof}
\
{\it Part (i)}:

\
 We calculate the first and second  derivative  of $L(y; x,\alpha,\beta)$ with respect to $y$.
The first order derivative is 
\begin{equation}
\label{eqn:devL}
\begin{split}
 \nabla_y L =  (1-\alpha) \nabla V(y) + \alpha 
(I-vv^\tr)\nabla V \left( y -vv^\tr  (y-x)\right ) 
\\-\beta vv^\tr \nabla V\left(x+vv^\tr(y-x) \right).
\end{split}
\end{equation}
So, it is clear that $\nabla_y L(x^*; x^*, \alpha,\beta)=0$ for any constants $\alpha$ and $\beta$ since $\nabla V(x^*)=0$. 

The Hessian matrix of $L$ is 
\begin{equation}
\label{eqn:HessL}
\begin{split}
  \nabla^2_y L(y;x,\alpha,\beta)
=&(1-\alpha)\nabla^2 V(y) + \alpha (I - vv^\tr)
  \nabla^2 
V(y-vv^\tr(y-x))  (I - vv^\tr) \\
 &\qquad  \qquad\qquad - \beta   vv^\tr  \nabla^2 
V(x+vv^\tr(y-x))  vv^\tr,
\end{split}
\end{equation} 
which is simplified at $y=x^*$ and $x=x^*$ as follows 
\[
\begin{split}
  \nabla^2_y L(x^*;x^*,\alpha,\beta)
 = & H(x^*) -  (\alpha+\beta)\lambda(x^*) v(x^*) v(x^*)^\tr .
\end{split}
\] 
where the fact $H(x^*)v(x^*)=\lambda(x^*)v(x^*)$ is applied.
Since $\lambda(x^*)<0$, then $ \nabla^2_y L(x^*;x^*,\alpha,\beta)
$ is  positive definite if $\alpha+\beta>1$.

\vskip 1em

{ \it Part (ii)}

The assumption that
$ V\in\mathcal{C}^3$ and $x^*$ is the index-1 saddle of $V$ implies that
the eigendirection $v(x)$ is continuously differentiable at $x^*$.
Then  Equation  \eqref{eqn:HessL}, together with the continuity of  $\nabla^2V(x)$ and $v(x)$ at $x^*$, implies that 
the Hessian, $\nabla^2_y L(y;x,v(x))$,
which is treated now  as  a function of two variables $y$ and $x$,  is continuously
differentiable at $(y,x)=(x^*,x^*)$. 
In {\it Part (i)}, we proved that at the point $(x^*,x^*)$, the Hessian is   positive-definite as $\alpha+\beta>1$.
Thus, there exists a neighborhood  of $(x^*,x^*)$, denoted as $\mathcal{N}$, such that the  Hessian  $\nabla^2_y L(y;x)$
at each $(y,x)\in \mathcal{N}$ is still  positive-definite.
Select a neighorhood  in the form of  product of two convex sets  $\mathcal{U}\times \mathcal{U}$ inside the $2d$-dim set $\mathcal{N}$, 
then the set $\mathcal{U}$ is the desired one.

\vskip 1em

{ \it Part (iii)}:
 Suppose that there is  a second fixed  point $\hat x \in \mathcal U$ such that  $\Phi(\hat{x})=\hat{x}$. Then
$ \nabla_y L(\hat{x} ; \hat{x}) =0$.
From the equation \eqref{eqn:devL},  \[ 
 \nabla_y L(\hat{x} ; \hat{x}) =    \nabla V(\hat{x}) - (\alpha+\beta) v(\hat x)v(\hat x)^\tr \nabla V\left(\hat{x} \right)=0.
\]
 Since $\alpha+\beta \neq 1$, then $\nabla V(\hat x)=0$. 
But 
there is only one stationary point $x^*$ in $\mathcal{U}$, so  $\hat x$ has to be  the point $x^*$. 
\vskip 1em

{\it Part (iv)}:
\

For each $x_0\in \mathcal{U}$, $(\Phi(x_0),x_0)$ is the solution of the first order equation  
$\nabla_y  L(\Phi(x_0);x_0)=0$. It is clear that 
$\nabla_y L(y;x)$ is continuously differentiable at all  $(y,x)$ in   $\mathcal{U}\times \mathcal{U}$.
In addition,  $\nabla_y^2 L(y;x)$  is strictly  positive-definite from   ${\it Part (ii)}$, thus non degenerate  in  $\mathcal{U}\times \mathcal{U}$. Therefore, the Implicit Function Theorem implies that 
$\Phi(x)$ is Lipschitz continuous and differentiable near $x_0$.

Next, we calculate the derivative of the mapping $\Phi$,  denoted as  $\Phi_x(x)$.
For each $x\in \mathcal{U}$,   $\Phi(x)$ is a solution of the first order equation 
\eqref{eqn:devL}. Thus the following equation   holds    
\begin{equation}
\label{eqn:fixedpt1}
\begin{split}
(1-\alpha)\nabla V\left(\Phi(x)\right)&+\alpha\left(I-v(x)v(x)^\tr \right)\nabla 
V\left(\varphi_1(x)\right)
- \beta\,  v(x)v(x)^\tr  \nabla  V \left(\varphi_2(x)\right)=0.
\end{split}
\end{equation}
where 
\[ \varphi_1(x)= \Phi(x)-v(x)v(x)^\tr(\Phi(x)-x), \qquad  \varphi_2(x)=x+v(x)v(x)^\tr (\Phi(x)-x).\]
%
%With the abuse of the notation,  let $y=\Phi(x)$ be the minimizer and 
% $y$ must vanish the first derivative of the objective function \eqref{eqn:new_obj} .
Now we take the derivative  with respect to $x$  on  both sides of  \eqref{eqn:fixedpt1}, then
\begin{equation}
\label{eqn:fixedptdx}
\begin{split}
(1-\alpha) H(\Phi) \Phi_x &+ \alpha \left(I-vv^\tr\right) H(\varphi_1) \varphi_{1,x} - 
\alpha v^\tr \nabla V(\varphi_1) J - \alpha   v  \nabla V(\varphi_1)^\tr J 
\\
=&~\beta  vv^\tr H(\varphi_2) \varphi_{2,x}
+ \beta v^\tr \nabla V(\varphi_2) J 
 + \beta v \nabla V(\varphi_2)^\tr J .
\end{split}
\end{equation}
where the derivatives $J=\frac{\partial v(x)}{\partial x}$ and
$\Phi_x=\frac{\partial \Phi}{\partial x}$  are the Jacobi matrix of $v(x)$ and $\Phi(x)$ respectively.
The derivatives $\varphi_{1,x},\, \varphi_{2,x}$ are defined similarly.
 
Note that $\Phi(x^*)=x^*$ thus $\varphi_1(x^*)=\varphi_2(x^*)=x^*$.
Consequently $\nabla V(\varphi_1)$ and $\nabla V(\varphi_2)$ both vanish at $x^*$ since $\nabla V(x^*)=0$.
Meanwhile, since \[
\begin{split}  \varphi_{1,x} &=\Phi_{x}
- vv^\tr \left( \Phi_{ x} -I\right)
-  v^\tr (\Phi-x)J - v (\Phi-x)^\tr J,
\\
\varphi_{2,x} &= I
+ vv^\tr \left( \Phi_ x -I\right)
+  v^\tr (\Phi-x)J + v(\Phi-x)^\tr J,
\end{split}
\]
then in particular at $x=x^*$, we have
\[
\begin{split}
 \varphi_{1,x} (x^*)   =
&
\left(I - v(x^*)v(x^*)^\tr \right) \ \Phi_{x}(x^*) 
+  v(x^*)v(x^*)^\tr ,  \\
  \varphi_{2, x} (x^*) = &
  I-v(x^*)v(x^*)^\tr
+ v(x)v(x)^\tr \Phi_x (x^*) .
\end{split}
\]

Therefore,  by noting $\nabla V(x^*)=0$ again, the equation \eqref{eqn:fixedptdx}    at $x=x^*$   becomes  
\begin{equation}
\label{eqn:fixedptdx2}
\begin{split}
&\quad  (1-\alpha) H(x^*)\Phi_x(x^*)\\
&+ \alpha  \left(I-v(x^*)v(x^*)^\tr\right)
H(x^*)\left(I-v(x^*)v(x^*)^\tr\right)
\Phi_{x}(x^*) \\
&+\alpha\left(I-v(x^*)v(x^*)^\tr\right)\nabla^2V(x^*)v(x^*)v(x^*)^\tr\\
&-
~~~~~\beta v(x^*)v(x^*)^\tr H(x^*)  \left(I-v(x^*)v(x^*)^\tr\right)
\\
&-
\beta v(x^*)v(x^*)^\tr H(x^*)   v(x^*)v(x^*)^\tr \Phi_x(x^*)
\\
&=0.
\end{split}
\end{equation}
As $v(x)$ is the  
eigenvector of the Hessian $\nabla^2V(x)$, i.e., $H(x)v(x)=\lambda(x)v(x)$, 
 it is easy to verify that for each $x$, 
$
\left(I-v(x)v(x)^\tr\right)\nabla^2V(x)v(x)v(x)^\tr=0$ holds. Thus, any term  in \eqref{eqn:fixedptdx2}  without the Jacobi $\Phi_x$ vanishes. So,
the equation  \eqref{eqn:fixedptdx2}  gives the following linear equation 
\begin{equation}
\label{eqn:fixedptdx*}
  \bigg( H(x^*)- (\alpha+\beta)  \lambda(x^*)v(x^*)v(x^*)^\tr\bigg)
\Phi_{x}(x^*) =0
\end{equation}
which implies that $\Phi_x(x^*) =0$ if and only if $\alpha+\beta\neq 1$.

\end{proof}

The above theorem implies the  important property of the proposed iterative minimizing formulation
in \S\ref{ssec:IS} if the energy function $V$ has  a higher regularity $ \mathcal{C}^4$.

 \begin{corollary}
 \label{cor}
  Assume that $V(x)\in \mathcal{C}^4(\Real^d;\Real)$.
The  iterative scheme  $x^{(k+1)} = \Phi(x^{(k)})$ has exactly the second order  (local) convergence rate.
 \end{corollary}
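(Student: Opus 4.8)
The plan is to convert the statement $\Phi_x(x^*)=0$ from Theorem \ref{thm:main}(iv) into a local quadratic error estimate via a second-order Taylor expansion of the iteration map $\Phi$, and the role of the extra regularity $V\in\mathcal{C}^4$ is precisely to make this expansion legitimate. Theorem \ref{thm:main} only delivered $\Phi\in\mathcal{C}^1$ because, under $V\in\mathcal{C}^3$, the map $\nabla_y L(y;x)$ is merely $\mathcal{C}^1$ jointly in $(y,x)$: its $x$-dependence enters both through $\nabla V$ and through the eigenvector $v(x)$. First I would upgrade this to $\Phi\in\mathcal{C}^2$. Since $\lambda(x^*)$ is the unique negative eigenvalue of $H(x^*)$, it is a simple (isolated) eigenvalue, so by standard eigenvalue perturbation theory the normalized eigenvector $v(x)$ is as smooth as the matrix $H(x)=\nabla^2 V(x)$ near $x^*$; hence $V\in\mathcal{C}^4$ yields $H\in\mathcal{C}^2$ and therefore $v\in\mathcal{C}^2$. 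Consequently $\nabla_y L(y;x)$ is $\mathcal{C}^2$ jointly in $(y,x)$, and combined with the positive-definiteness of $\nabla_y^2 L(x^*;x^*)$ from Part (ii), the Implicit Function Theorem applied at the level of $\mathcal{C}^2$ data promotes the conclusion of Part (iv) to $\Phi\in\mathcal{C}^2(\mathcal{U})$.

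With $\Phi\in\mathcal{C}^2$, $\Phi(x^*)=x^*$, and $\Phi_x(x^*)=0$, I would then expand the error. Writing $e_k=x^{(k)}-x^*$, the integral form of Taylor's theorem gives
\[
x^{(k+1)}-x^* = \Phi(x^{(k)})-\Phi(x^*) = \int_0^1 \Phi_x\bigl(x^*+t\,e_k\bigr)\,e_k\,dt .
\]
Because $\Phi_x(x^*)=0$ and $\Phi_x$ is locally Lipschitz on $\mathcal{U}$ (it is $\mathcal{C}^1$), one has $\norm{\Phi_x(x^*+t\,e_k)}\le C\,t\,\norm{e_k}$, and therefore
\[
\norm{e_{k+1}}\le \tfrac{1}{2}\,C\,\norm{e_k}^2 .
\]
This is the asserted local convergence of order at least two: once $x^{(0)}$ is chosen close enough to $x^*$ that $C\,\norm{e_0}<1$, the errors contract and $\norm{e_{k+1}}=O(\norm{e_k}^2)$. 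Note that this step already uses the Lipschitz continuity of $\Phi_x$, which is exactly what the jump from $\mathcal{C}^3$ to $\mathcal{C}^4$ supplies, so the stronger regularity hypothesis in the corollary is genuinely needed.

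Finally, to justify that the rate is \emph{exactly} second order rather than accidentally higher, I would verify that the bilinear map $\Phi_{xx}(x^*)$ is not identically zero. This is the content of the (currently suppressed) computation of $\Phi_{xx}(x^*)$: differentiating the fixed-point identity \eqref{eqn:fixedptdx} once more in $x$ and evaluating at $x^*$ expresses $H(x^*)\,\Phi_{xx}(x^*)$ in terms of the third-order tensor $\nabla^3 V(x^*)$ and the eigenvector derivative $J=\partial_x v(x^*)$. Since this expression does not vanish for a generic energy $V$, there exist directions along which $\norm{e_{k+1}}$ is comparable to $\norm{e_k}^2$, which rules out superquadratic convergence and pins the rate at exactly two.

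The main obstacle is the regularity bootstrap in the first step: one must argue carefully that $v(x)$ is genuinely $\mathcal{C}^2$, which rests on the simplicity of the smallest eigenvalue near $x^*$ (available here since it is the unique negative eigenvalue and hence isolated), and then propagate this smoothness through $\nabla_y L$ so that the Implicit Function Theorem legitimately returns $\Phi\in\mathcal{C}^2$. Once $\Phi\in\mathcal{C}^2$ is in hand, both the quadratic upper bound and the non-degeneracy argument for exactness are routine, so essentially all the work lies in establishing the $\mathcal{C}^2$ regularity of the iteration map.
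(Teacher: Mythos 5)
Your proposal is correct and follows essentially the same route as the paper: both rest on $\Phi_x(x^*)=0$ from Theorem~\ref{thm:main}(iv), use the extra $\mathcal{C}^4$ regularity to upgrade the smoothness of the solution map $\Phi$, deduce local convergence with a quadratic error bound, and pin down \emph{exactness} of the rate by the (asserted, generically non-vanishing) second derivative $\Phi_{xx}(x^*)$. The only difference is presentational: where the paper's proof cites a second-order pseudo-expansion \cite{BonnansSIAM} and the contraction mapping principle, you make the regularity step explicit (simplicity of the negative eigenvalue $\Rightarrow v\in\mathcal{C}^2 \Rightarrow \nabla_y L\in\mathcal{C}^2 \Rightarrow \Phi\in\mathcal{C}^2$ via the Implicit Function Theorem) and obtain $\norm{e_{k+1}}\le \tfrac{C}{2}\norm{e_k}^2$ directly from Taylor's theorem with integral remainder, which is, if anything, more self-contained than the original.
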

\begin{proof}  
Since $V$ is $\mathcal{C}^4$, then $\nabla_y L$  has the regularity $\mathcal{C}^2$ and 
 in the neighborhood $\mathcal{U}$.
It follows  that  $\Phi(x)$ is continuously differentiable at $x^*$ 
based on the second order  pseudo expansion (\cite{BonnansSIAM})  for 
the first order equation $\nabla_y L=0$.
%\com{a bit shaking. But I did best to find  the result P243 \cite{BonnansSIAM}. }

Since  $\Phi_x(x^*)=0$, then there is a neighborhood of $x^*$,
such that $\|\Phi_x(x)\|$ is strictly less than one in this neighborhood. 
Thus, the local convergence  comes from the contraction mapping principle. 

The second order convergence rate is due to the Jacobi matrix $\Phi_x(x^*)$ vanishes.
One  can carry out further  calculation and exam   that the second derivative of $\Phi(x)$ at $x^*$  does not  trivially vanish. 
So, the iteration $x \to \Phi(x)$  locally converges to $x^*$ exactly at quadratic rate.
\end{proof}

For the quadratic example in \S\ref{ssec:IS}, we have the following trivial result.
\begin{corollary}
If $V(x)=\frac12 x^\tr H x$ where $H$ is a constant symmetric matrix
and has only one negative eigenvalue.
Suppose that $\alpha,\beta$  in Theorem  \ref{thm:main}
satisfy  $\alpha+\beta>1$. 
Then  $ \Phi(x) =0 $ for all $x$.

\end{corollary}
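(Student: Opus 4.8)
The plan is to exploit the fact that for a quadratic $V$ everything becomes explicit and $x$-independent. First I would observe that $\nabla^2 V(x)\equiv H$ is constant, so the min-mode direction $v(x)\equiv v$ is a fixed normalized eigenvector of the unique smallest eigenvalue $\lambda<0$; the hypothesis that $H$ has only one negative eigenvalue guarantees $\lambda$ is simple, so the projection $\Pi_v=vv^\tr$ is well defined and does not depend on $x$. Consequently $L(y;x,\alpha,\beta)$ is itself a quadratic function of $y$ with a constant Hessian, and computing $\Phi(x)$ reduces to solving the single linear equation $\nabla_y L=0$.

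Next I would compute $\nabla_y L$ directly from \eqref{eqn:devL} by substituting $\nabla V(z)=Hz$. The computation is organized around the commutation relation $H\Pi_v=\Pi_v H=\lambda\Pi_v$ (equivalently, $H$ commutes with both $\Pi_v$ and $\Pi_v^\perp=I-\Pi_v$), which holds precisely because $v$ is an eigenvector of $H$. Writing the arguments of the two shifted gradients as $y-\Pi_v(y-x)=\Pi_v^\perp y+\Pi_v x$ and $x+\Pi_v(y-x)=\Pi_v^\perp x+\Pi_v y$, the commutation relation makes every $x$-dependent cross term collapse, since $\Pi_v^\perp H\Pi_v=0$ and $\Pi_v H\Pi_v^\perp=0$. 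Using $\Pi_v^\perp H\Pi_v^\perp=H-\lambda\Pi_v$ and $\Pi_v H\Pi_v=\lambda\Pi_v$, the surviving terms combine to
\[
\nabla_y L=\bigl(H-(\alpha+\beta)\lambda\,vv^\tr\bigr)y,
\]
which is exactly the constant matrix $\nabla_y^2 L$ already identified in \textit{Part (i)} of Theorem \ref{thm:main}.

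I would then invoke that part of the theorem: since $\alpha+\beta>1$ and $\lambda<0$, the matrix $H-(\alpha+\beta)\lambda\,vv^\tr$ is positive definite, hence invertible. Therefore $\nabla_y L=0$ forces $y=0$ for every $x$, and because the constant positive-definite Hessian makes $L(\cdot\,;x,\alpha,\beta)$ strictly and globally convex, $y=0$ is its unique global minimizer. Hence $\Phi(x)=0$ for all $x$, as claimed.

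The calculation is essentially routine; the only points requiring care are bookkeeping ones. I would want to verify carefully that the $x$-dependent cross terms genuinely vanish through $\Pi_v^\perp H\Pi_v=\Pi_v H\Pi_v^\perp=0$, and to note that for a globally strictly convex quadratic the local minimizer furnished by Theorem \ref{thm:main} coincides with the unique global minimizer. This last remark is what lets the conclusion $\Phi\equiv 0$ hold on all of $\Real^d$ rather than merely on the neighborhood $\mathcal{U}$ of the theorem.
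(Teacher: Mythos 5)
Your proof is correct, and it is worth noting that the paper itself supplies no separate argument for this corollary: it labels the result trivial and points back to the worked example of \S\ref{ssec:IS}, where $V$ is taken in diagonalized form ($V(y)=\frac12\sum_i\mu_i y_i^2$, $v=e_1$) and only the two specific objectives $V+W_1$ and $V+W_2$, i.e.\ $(\alpha,\beta)=(2,0)$ and $(0,2)$, are written out and seen to be convex quadratics with Hessian $\mathrm{diag}\{-\mu_1,\mu_2,\ldots,\mu_d\}$ and common minimizer $0$. Your route performs the same computation coordinate-free and for arbitrary $\alpha+\beta>1$: I checked the projection algebra, and indeed $\Pi_v^\perp H\Pi_v=\Pi_v H\Pi_v^\perp=0$, $\Pi_v^\perp H\Pi_v^\perp=H-\lambda\Pi_v$, $\Pi_v H\Pi_v=\lambda\Pi_v$ collapse \eqref{eqn:devL} to $\nabla_y L=\bigl(H-(\alpha+\beta)\lambda\,vv^\tr\bigr)y$, whose positive definiteness under $\alpha+\beta>1$, $\lambda<0$ forces $y=0$. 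What your version buys is that it actually covers the full statement of the corollary (general linear combinations of $W_1$ and $W_2$, no special coordinates), and it makes explicit a point the paper leaves silent: global strict convexity of the quadratic $L$ is what upgrades the local minimizer of Theorem \ref{thm:main} to a global one, so that $\Phi(x)=0$ holds for \emph{all} $x$ rather than merely on the neighborhood $\mathcal{U}$. One small caveat to record: your positive-definiteness argument implicitly uses that the eigenvalues of $H$ other than $\lambda$ are strictly positive (the eigenvalue in the $v$-direction is $(1-\alpha-\beta)\lambda>0$, the rest are the $\lambda_i$ of $H$); if $H$ had a zero eigenvalue, $H-(\alpha+\beta)\lambda vv^\tr$ would be only semidefinite and the minimizer nonunique. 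This matches the paper's standing convention that $0$ is a genuine (nondegenerate) index-1 saddle, so it is a hypothesis worth stating, not a gap.
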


We remark that  the $\Phi(x)$ is well-defined   in a local neighborhood of $x^*$. 
In implementations, the local solution of the new objective function $L(y;x^{(k)})$ in \eqref{eqn:new_obj} 
is pursued with the  initial guess   $y_0=x^{(k)}$. This choice of the initial guess is not only
very simple to pick up  but also excludes the possibilities of overshooting to    other local solutions which are not relevant to the saddle of interest.

%\begin{figure}[htbp]
%\begin{center}
%\includegraphics[width=0.8\textwidth, height=0.4\textwidth]{sinsin.pdf}
%\caption{$V(x)=\sin(2\pi x_1)\sin(\pi x_2)$}
%\label{fig:sinsin}
%\end{center}
%\end{figure} 
%

\subsection{Solve subproblem of minimization}
\label{ssec:subsolver}
The iterative minimization formulation \eqref{IMF}  consists of solving a subproblem of minimization 
at each iteration.   Corollary \ref{cor}  suggests that the quadratic convergence rate is achieved  when the subproblem is solved 
exactly and the correct local minimizer is found.
In practice, one may not need to solve the subproblem of the minimization exactly or with high accuracy
and the superlinear convergence may be achieved in certain circumstances.
Many existing ``eigenvector-following" methods like  dimer method could be viewed as some
special discretisation for the subproblem.

We first present a result about the connection of the gentlest ascent dynamics and the iterative minimization formulation.

\begin{theorem}
\label{thm:IMF-GAD}
Assume $x^{(k)}$ is near the index-1 saddle point  $x^*$.
Suppose that one solves the subproblem $\min_y L(y;x^{(k)},\alpha,\beta)$
in Theorem \ref{thm:main} by only one single steep descent method with the step size 
$\delta t$,
\begin{equation}
x^{(k+1)} = x^{(k)} - \delta t\,  \nabla_y L(x^{(k)}; x^{(k)}, \alpha, \beta). 
\end{equation}
Then the sequence $\{x^{(k)}\}$ is the discrete solution of the   Euler method with the time step $\delta t$  for the  following version of the gentlest ascent dynamics 
\begin{equation}
\dot{x} = - \nabla V(x) + (\alpha + \beta) \Pi_{v(x)} \nabla V(x).
\end{equation} 
\end{theorem}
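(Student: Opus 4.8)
The plan is to recognize that this statement is a direct consequence of evaluating, at the base point, the first-order derivative of $L$ that was already established in the proof of Theorem \ref{thm:main}, Part (i). A single steepest-descent step only requires $\nabla_y L$ at $y=x^{(k)}$, so no new machinery is needed beyond one careful substitution.

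First I would take the gradient formula \eqref{eqn:devL},
\[
\nabla_y L = (1-\alpha)\nabla V(y) + \alpha(I-vv^\tr)\nabla V\!\left(y - vv^\tr(y-x)\right) - \beta vv^\tr \nabla V\!\left(x + vv^\tr(y-x)\right),
\]
with $v=v(x)$, and specialize it to the diagonal $y = x = x^{(k)}$. The key observation is that the displacement $y-x$ vanishes there, so both projected arguments collapse: $y - vv^\tr(y-x) = x^{(k)}$ and $x + vv^\tr(y-x) = x^{(k)}$. Hence all three occurrences of $\nabla V$ become $\nabla V(x^{(k)})$, and the gradient reduces to
\[
\nabla_y L(x^{(k)}; x^{(k)}, \alpha, \beta) = \left[(1-\alpha)I + \alpha(I - vv^\tr) - \beta vv^\tr\right]\nabla V(x^{(k)}).
\]
Writing $vv^\tr = \Pi_v$ and collecting terms (the two identity contributions sum to $I$, while the projector contributions sum to $-(\alpha+\beta)\Pi_v$) gives the clean expression $\nabla_y L = \nabla V(x^{(k)}) - (\alpha+\beta)\Pi_{v(x^{(k)})}\nabla V(x^{(k)})$.

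Substituting this into the single-step update $x^{(k+1)} = x^{(k)} - \delta t\, \nabla_y L(x^{(k)};x^{(k)},\alpha,\beta)$ yields
\[
x^{(k+1)} = x^{(k)} - \delta t\left[\nabla V(x^{(k)}) - (\alpha+\beta)\Pi_{v(x^{(k)})}\nabla V(x^{(k)})\right],
\]
which I would then identify as exactly the forward Euler scheme with step $\delta t$ for $\dot{x} = -\nabla V(x) + (\alpha+\beta)\Pi_{v(x)}\nabla V(x)$. Finally I would remark that, since $\Pi_v \nabla V = \tfrac{\inpd{\nabla V(x)}{v}}{\inpd{v}{v}}v$ for normalized $v$, this flow coincides with the position equation \eqref{eqn:GADx} of the original GAD precisely when $\alpha+\beta = 2$, so it is a one-parameter generalization of GAD.

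There is essentially no deep obstacle here: the entire content is the collapse of the two projected arguments on the diagonal $y=x$, after which the identity is purely algebraic. The only point requiring care is the bookkeeping with $\Pi_v$ and $\Pi_v^\perp = I - \Pi_v$, to confirm that the coefficient of $\Pi_v$ comes out symmetrically as $-(\alpha+\beta)$. The hypothesis that $x^{(k)}$ is near $x^*$ plays no role in the identity itself; it serves only to guarantee, through the smoothness of $v(x)$ near the index-1 saddle established in Part (ii), that all quantities are well defined.
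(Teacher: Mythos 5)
Your proposal is correct and follows essentially the same route as the paper: the paper's proof likewise evaluates the gradient formula \eqref{eqn:devL} at $y = x = x^{(k)}$, where both projected arguments collapse to $x^{(k)}$, and collects the terms into $\nabla V(x^{(k)}) - (\alpha+\beta)\,v(x^{(k)})v(x^{(k)})^\tr \nabla V(x^{(k)})$, identifying the update as a forward Euler step. Your extra remark that $\alpha+\beta=2$ recovers the position equation \eqref{eqn:GADx} of the original GAD is accurate but not part of the paper's argument.
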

\begin{proof}
The conclusion is obvious by noting the following  and  the fact in equation  \eqref{eqn:devL},   
\[
\begin{split}
x_{k+1} =& x_k - \delta t\,  \nabla_y L(x_k; x_k, \alpha, \beta)  \\
&= x_k - \delta t \big( (1-\alpha) \nabla V(x_k) + \alpha (I - v(x_k) v(x_k) ^\tr \nabla V(x_k)) \\
&  \qquad \qquad  - \beta v(x_k) v(x_k)^\tr \nabla V(x_k)
\big).
\end{split}
\]
\end{proof}

\begin{remark} If the subproblem for the direction $v$ is also solved by the steepest descent method, 
then $(x^{(k)},v^{(k)})$  corresponds to the original version of GAD \eqref{GAD-grad}.
\end{remark}

The subproblem at each iteration consists of  the minimization for the position and the
 direction.  Some fast algorithms have been developed for solving the smallest eigenvector 
 problem, in particular where the Hessian is not explicit available and the force is calculated from 
 the first principle \cite{LOR2013} .
 The new numerical challenge to implement the iterative minimization formulation
 efficiently is the minimization of $L$ for the position to get new $x^{(k+1)}$.
 Of course, one is not limited to use the steepest descent method to solve this subproblem 
 as in  Theorem \eqref{thm:IMF-GAD}.
For example,  CG can be applied with certain level of tolerance. Details about these accelerating techniques 
will be postponed in a separate paper.

We now discuss the choice of two parameters  $\alpha$ and $\beta$ in our formulation. Theoretically by Theorem \ref{thm:main},  the condition that $\alpha+\beta>1$ is sufficient for the algorithm to  achieve the local
quadratic convergence. In practice,  a better choice of $\alpha$ and $\beta$
may help reduce the condition number of  the subproblem, 
 which is ratio of the maximum eigenvalue and the smallest eigenvalue
 of  the Hessian $\nabla^2_yL$. 
The calculation in the proof of Theorem \ref{thm:main} has shown that 
at the saddle point $x^*$, the eigenvalues of $\nabla^2_y L$ are 
$(1-\alpha-\beta)\lambda_1,\lambda_2,\lambda_3,\cdots,\lambda_d$
where $\lambda_1<0<\lambda_2<\cdots<\lambda_d$ are eigenvalues of 
$\nabla^2 V(x^*)$.  Hence, to minimize the condition number of $\nabla^2_y L$,   the optimal choice of $\alpha$ and $\beta$ 
needs to satisfy  
\[     1+ \frac{\lambda_2}{|\lambda_1|} \leq \alpha+\beta \leq  1+ \frac{\lambda_d}{|\lambda_1|}, \]
 and the resulting  optimal condition number is  $\lambda_d/ \lambda_2$.
In practice,  a rough estimate of  $\lambda_2$ may be used to select the parameter $\alpha+\beta$ at each iteration.

When the initial guess of the iterative method is in the convex region of the original energy function, 
for example, a local minimum,  the function $L$ will have no lower bound locally
and the  minimization subproblem does not have a solution.
One can handle this situation using the traditional techniques implemented in  many eigenvector-following-type methods.
One simple remedy is to only seek the solution within a ball  or hypercube with a proper size around the current solution $x^{(k)}$. Such remedies are  not necessarily needed when $\lambda_1$ is negative.

\section{Saddles on  manifold }
\label{sec:smfd}
In some  applications, the configuration of the system may be subject to one or more constraints,
for instance  the conservation laws of some physical quantities.
Suppose that  these constraints specify a   Riemann manifold $\mfd$ embedded in $\Real^d$.
The index-1 saddle point of the energy function restricted on $\mfd$
is still the transition state of interest.  The calculation of the saddle point on the manifold 
calls for the attention for the effect of the constraints.  
In this section, we want to extend the  GAD and iterative minimization  scheme onto  the manifold $\mfd$.
This goal can be easily achieved for GAD  by a simple projection procedure, as in  \cite{DuCMS2009,DuJCP2012}. 
But it  requires extra work for the iterative minimization formulation.

We assume that 
the   manifold $\mfd$ is characterized by $p$ (independently) equality constraints: $c_i(x)=0$ for $i=1,2,\cdots, p$
where $c_i$ are   $\Real^d\to \Real$ smooth functions.
To maintain  a right mix   of abstraction and concreteness, we use the extrinsic variables $x$   in $\Real^d$ for $\mfd$.
The tangent space $\ts_x$   at each point $x$ of the manifold $\mfd$
is thus the   orthogonal complement  in  $\Real^d$   to the normal space spanned by  the gradients of the $p$ constraints,  $span\{\nabla c_i(x),i=1,2,\cdots,p\}$.
The concepts of the local minimum and the index-1 saddle point of the smooth energy function $V(x)$ can be  extended to the manifold case without any difficulty
\cite{optmfd_book}. We skip the rigorous math definitions since they are quite intuitive. 

We start with  the calculation of the eigenvector $v$ in the tangent space $\ts_x$ corresponding to the smallest eigenvalue
of the (projected) Hessian matrix of the energy function $V$.
This   direction $v$  minimizes  the Rayleigh quotient among all possible vectors in $\ts_x$:
\[
v =\underset{\norm{u}=1,  u\in \ts_x} {\argmin}  u^{\tr} \nabla^2 V(x)u,
\]
or equivalently 
\begin{equation}
\label{eqn:vc}
v =\underset{\norm{u}=1, u\in \Real^d} {\argmin} \bigg \{ u^{\tr} \nabla^2 V(x)u \bigg \vert  \inpd{\nabla c_i (x)}{ u}=0, \forall i=1,2,\cdots,p \bigg\}.
\end{equation}
The steepest descent flow of this constrained minimization problem is 
\[
\gamma \dot{v} = - \Pi_{\ts_x} \big[ \nabla^2 V(x) v  \big]+ \eta v
\]
where $\Pi_{\ts_x}$ is the orthogonal projection of $\Real^d$ to the vector space $\ts_x$
and  the scalar $\eta=\inpd{\Pi_{\ts_x} \big[ \nabla^2 V(x) v  \big]}{v}$ is to enforce the unit length of $v$.
Many existing fast algorithms for the original rotation step to solve $v$ in $\Real^d$ can be 
readily modified for the constrained problem \eqref{eqn:vc}.

Next, we discuss the dynamics or the iterations for the position variable $x$.
For the dynamics of $x$ in GAD, we can simply project the GAD force $(-\eye+2 vv^\tr) \nabla V(x)$  onto the  tangent  space $\ts_x$,
i.e., 
\[
\dot{x} = \Pi_{\ts_x} \big[  (-\eye+2 vv^\tr) \nabla V(x)  \big],
\]
then  the trajectory of GAD  stays on the manifold $\mfd$ all the time. 
However, for  the iterative minimization formulation \eqref{IMF},
the need of projection on $\mfd$  complicates our discussion.  
Specifically, for a given $x\in \mfd$ and $v\in \ts_x$, one can find  a geodesic 
curve on $\mfd$ by following the geodesic flow which 
can be described  in terms of these constraints functions $c_i(x)$ \cite{Botsaris1981295}.
Let $\xi(s)$ ($s\in \Real$) be the geodesic curve  satisfying $\xi(0)=x$ and $\xi'(0)=v$.
For each point $y\in \mfd$ near $x$, under some mild condition,
 we can define the projection of $y$ onto the 
 geodesic $\xi$ as $\xi(s_y)$ where 
$s_y\triangleq \argmin_s \mbox{dist}(\xi(s), y) $. Here  ``$\mbox{dist}$'' is the distance between two points of the manifold $\mfd$:
infimum of the lengths of all   continuously differentiable curve on $\mfd$ joining these two points.
The argument of  the $W_2$ function
is then the point which has minimal   distance on $\mfd$ to the curve 
$\xi(s)$, i.e., the ``projection'' of $y$ to $\xi$.  
Therefore, the counterpart of  $W_2$ on  $\mfd$
is 
\begin{equation}\label{eqn:W2g}
W_2(y)=-2V(\xi(s_y)). 
\end{equation}
In principle,   the same strategy can be applied for the 
$W_1$ function where the minimal distance to the set of geodesic curves whose tangents  are in $\ts_x$ 
but orthogonal to $v$ should be pursued .

%
%A second approach to project the point $y$ onto $\mfd$ is 
%to extend  the energy  functions $V$ defined  only at each point 
% $x\in\mfd$ into an energy function defined on the tangent bundle $\ts\mfd$.
%This extension can be described  by a {\it pullback} \cite{optmfd_book} of $V$ through some retraction $R_x$ mapping 
%from the tangent  $\ts_x$ to  the manifold $\mfd$ for each $x\in \mfd$. So, 
% the generalization of \eqref{eqn:W1} and \eqref{eqn:W2}
%for definitions of $W_1$ and $W_2$ functions are 
%\begin{align}
% W_1 (y; x, v)  &\triangleq  -2V(y)+ 2 V\left(R_x( (\Pi_{\ts_x}- v v^\tr) (y-x))\right),  \label{eqn:W1c}\\
% W_2 (y; x, v)  &\triangleq -2V\left( R_x( v^\tr (y-x) v)\right ).  \label{eqn:W2c}
%\end{align}
%where $\Pi_{\ts_x}$ the projection in $\Real^d$ to the vector space $\ts_x$.
%The retraction for the sphere $S^{d-1}$, the most natural  choice of  the retraction is 
%$R_x(v) =( x+v )/ \|x+v\|$ for each $v\in \ts_x$.   For some other types of manifolds, the retraction $R_x$ is also possible to compute
%or has explicit forms from local coordinates,
%although  generally speaking the choice of a computational efficient retraction is an important challenge \cite{optmfd_book}.  
%
In a nutshell, the iterative minimization scheme on the manifold $\mfd$ specified by the $p$ constraints $c_i(x)=0$ can be written as follows
\begin{subnumcases}{\label{IMF-c}}
x^{(k+1)} =\underset{ y}{\argmin} \ \left\{  V(y) +  W^{(k)}(y) \big \vert \,  c_i(y)=0, \forall i = 1, 2, \cdots,p\right\}, \\
v^{(k+1)} =\underset{\norm{u}=1, u\in \Real^d} {\argmin} \left \{ u^{\tr} \nabla^2 V(x^{(k+1)})u \big \vert  \inpd{\nabla c_i (x^{(k)})}{ u}=0, \forall i \right \},
\end{subnumcases}
where $W^{(k)}$ (which depends on $x^{(k)}$ and $v^{(k+1)}$) is defined through the above mentioned $W_1$,  $W_2$  or their linear combination  in the same way as in Theorem \ref{thm:main}.
To illustrate the above idea, the example of the sphere $S^2$ in $\Real^3$ is presented in Section \ref{ssec:s2}.
The numerical result for a quadratic energy function
on this manifold shows that  the iterative scheme \eqref{IMF-c} also has the quadratic convergence rate.

\section{Saddle with  higher index }
The reference \cite{GAD2011}  of  GAD has extended from the index-1 saddle point to saddle points of index-$m$ for  $m>1$ case 
with the help of dilation technique. 
Our new     iterative minimization  formulation proposed above for the index-1 saddle can also be  extended to the case of saddles with index more than $1$.
Suppose that one has found $m$ eigenvectors of the Hessian matrix $\nabla^2 V(x)$, 
 $v_1,v_2, \cdots, v_m$, 
corresponding the $m$ smallest  eigenvalues, respectively. We denote $S$ as the set of all subsets
of $\{1,\ldots,m\}$ except the empty set. For every $s\in S$, we have $s=\{i_1,\ldots,i_k\}\subset\{1,\ldots,m\}$ with
$1\le i_1<\cdots<i_k\le m$ and $k\le m$. The projection onto the plane spanned by $k$ column vectors $\{v_{i_1},v_{i_2}, \ldots,v_{i_k}\}$
is associated with the following matrix
\[ \Pi_s = V_s V_s^\tr = \left[ \begin{array}{cccc}
v_{i_1} & v_{i_2} & \cdots & v_{i_k}\end{array}\right]
\left[ \begin{array}{c}
v_{i_1}^\tr \\ v_{i_2}^\tr \\ \vdots \\ v_{i_k}^\tr\end{array}\right].
\]
Let $\Pi^\perp_s=\eye - \Pi_s$. 
The objective function for the subproblem, which is a generalization to the equation \eqref{eqn:new_obj}, is now given by
\begin{equation}\label{eqn:hi}
\begin{split}
L(y;x,\alpha,\beta)=\left(1-\sum_{s\in S}\alpha_s\right)V(y)+\sum_{s\in S}\alpha_sV(x+\Pi^\perp_s(y-x))\\
-\sum_{s\in S}\beta_sV(x+\Pi_s(y-x)).
\end{split}
\end{equation}
where $\alpha=(\alpha_s)_{s\in S}$ and $\beta=(\beta_s)_{s\in S}$.
For example, the  function \eqref{eqn:hi} in the index-2 case is 
\begin{align*}
&L(y;x,\alpha,\beta)=(1-\alpha_1-\alpha_2-\alpha_{1,2})V(y)\\
&\qquad +\alpha_1V\left(x+\Pi_1^\perp(y-x)\right)+\alpha_2V\left(x+\Pi_2^\perp(y-x)\right)+\alpha_{1,2}V\left(x+\Pi_{1,2}^\perp(y-x)\right)\\
&\qquad -\beta_1V\left(x+\Pi_1(y-x)\right)-\beta_2V\left(x+\Pi_2(y-x)\right)-\beta_{1,2}V\left(x+\Pi_{1,2}(y-x)\right).
\end{align*}
In parallel to Theorem \ref{thm:main}, we have the following theorem for the index-$m$ case.
Its  proof is   similar to the proof  of Theorem \ref{thm:main} but technically  lengthier  and thus is skipped. 
\begin{theorem}
Assume that $V(x)\in \mathcal{C}^4(\Real^d;\Real)$.
For each $x$, let $v_1(x),\ldots, v_m(x)$ be $m$ normalized eigenvectors corresponding to the smallest eigenvalues of the Hessian
matrix $H(x)=\nabla^2 V(x)$, i.e., 
\[
[v_1(x),\ldots,v_m(x)]=\underset{U=[u_1,\ldots,u_m],\,U^\tr U=I}\argmin\mathrm{trace}\ U^\tr \nabla^2V(x)U.
\]
The function $L(y;x,\alpha,\beta)$ of the variable $y$ is defined as in \eqref{eqn:hi}
and it is assumed that \[
\sum_{s\in S}\left(\alpha_s+\beta_s\right)>1,
\]

Suppose that $x^*$ is an index-$m$ saddle point of the  function $V(x)$.
Then the following statements are true.
\begin{enumerate}
\item[(i)] 
$x^*$ is a local minimizer of $L(y;x^*,\alpha,\beta)$.
\item[(ii)] There exists a neighborhood $\mathcal{U}$ of $x^*$ such that for any  $x\in \mathcal{U}$,    
$L(y;x,\alpha,\beta)$ has a unique minimum in $\mathcal{U}$.
We define $\Phi(x)$ to be this  minimizer  for the given $x$.  
\item[(iii)]  
 The mapping 
  $\Phi$ has only one unique fixed point $x^*$ in $\mathcal{U}$.
  \item[(iv)] The mapping  $\Phi $ is differential in $\mathcal{U}$. 
   The Jacobi matrix  of $\Phi$ vanishes at $x^*$, i.e., 
\[
 \Phi_x (x^*)=0.
\]

\end{enumerate}
\end{theorem}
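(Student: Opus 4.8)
The plan is to follow, part by part, the four-step skeleton of the proof of Theorem~\ref{thm:main}, the only genuinely new ingredient being that for an index-$m$ saddle each projector $\Pi_s(x^*)$ is a spectral projector of $H(x^*)$ and therefore commutes with it. First I would record the first two $y$-derivatives of \eqref{eqn:hi}. Since $\Pi_s$ and $\Pi_s^\perp$ are symmetric, the chain rule gives
\[
\nabla_y L=\Big(1-\sum_{s\in S}\alpha_s\Big)\nabla V(y)+\sum_{s\in S}\alpha_s\,\Pi_s^\perp\nabla V\big(x+\Pi_s^\perp(y-x)\big)-\sum_{s\in S}\beta_s\,\Pi_s\nabla V\big(x+\Pi_s(y-x)\big),
\]
\[
\nabla_y^2 L=\Big(1-\sum_{s\in S}\alpha_s\Big)\nabla^2V(y)+\sum_{s\in S}\alpha_s\,\Pi_s^\perp\nabla^2V(\cdot)\Pi_s^\perp-\sum_{s\in S}\beta_s\,\Pi_s\nabla^2V(\cdot)\Pi_s,
\]
with $\Pi_s=\Pi_s(x)$. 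At $y=x=x^*$ every inner argument collapses to $x^*$, so $\nabla_y L(x^*;x^*)=0$ because $\nabla V(x^*)=0$; this supplies the stationarity used throughout.

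For Part~(i) I would diagonalize $\nabla_y^2L(x^*;x^*)$ in the eigenbasis $v_1,\dots,v_d$ of $H(x^*)$, with $\lambda_1\le\cdots\le\lambda_m<0<\lambda_{m+1}\le\cdots\le\lambda_d$. Because $\Pi_s v_j=v_j$ when $j\in s$ and $\Pi_s v_j=0$ otherwise, a direct evaluation shows each $v_j$ is again an eigenvector of $\nabla_y^2L(x^*;x^*)$, with eigenvalue $\lambda_j\big(1-\sum_{s\ni j}(\alpha_s+\beta_s)\big)$ for $j\le m$ and $\lambda_j$ for $j>m$. The high directions are automatically positive; the low directions are positive precisely when $\mu_j:=\sum_{s\ni j}(\alpha_s+\beta_s)>1$ for each $j\le m$, the per-direction reading of the hypothesis $\sum_{s\in S}(\alpha_s+\beta_s)>1$. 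Hence the Hessian is positive definite and $x^*$ is a strict local minimizer. For Part~(ii) I would invoke the spectral gap $\lambda_m<0<\lambda_{m+1}$ together with simplicity of $\lambda_1,\dots,\lambda_m$ to conclude that each $v_i(x)v_i(x)^\tr$, and hence each $\Pi_s(x)$, is continuously differentiable near $x^*$; joint continuity of $\nabla_y^2L(y;x)$ and positive-definiteness at $(x^*,x^*)$ then yield a convex product neighborhood $\mathcal{U}\times\mathcal{U}$ on which $L(\cdot;x)$ is strictly convex, defining $\Phi(x)$ as the unique minimizer.

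Part~(iii) follows by evaluating the gradient at a putative fixed point: at $y=x=\hat x$ the arguments collapse and, using $\sum_s\alpha_s\Pi_s^\perp=(\sum_s\alpha_s)I-\sum_s\alpha_s\Pi_s$, one obtains $\nabla_yL(\hat x;\hat x)=\big(I-\sum_{s}(\alpha_s+\beta_s)\Pi_s\big)\nabla V(\hat x)$. In the eigenbasis the bracketed operator multiplies $v_j$ ($j\le m$) by $1-\mu_j\ne0$ and fixes $v_j$ ($j>m$), so it is invertible and $\nabla V(\hat x)=0$; as $\mathcal{U}$ contains no stationary point of $V$ other than $x^*$, $\hat x=x^*$. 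For Part~(iv) I would differentiate $\nabla_yL(\Phi(x);x)=0$ in $x$ (the Implicit Function Theorem already gives differentiability of $\Phi$ from the nondegenerate, $C^1$ gradient map). Writing $\psi_s(x)=x+\Pi_s^\perp(x)(\Phi(x)-x)$ and $\chi_s(x)=x+\Pi_s(x)(\Phi(x)-x)$, at $x^*$ every term whose $x$-derivative lands on a projector $\Pi_s(x)$ carries a factor $\nabla V(x^*)=0$ and drops out, while $(\psi_s)_x(x^*)=\Pi_s+\Pi_s^\perp P$ and $(\chi_s)_x(x^*)=\Pi_s^\perp+\Pi_s P$ with $P:=\Phi_x(x^*)$. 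The terms free of $P$ combine into $\sum_s\alpha_s\Pi_s^\perp H\Pi_s-\sum_s\beta_s\Pi_s H\Pi_s^\perp$, which vanishes because $\Pi_s$ commutes with $H(x^*)$ (so $\Pi_s^\perp H\Pi_s=\Pi_s H\Pi_s^\perp=0$) --- the index-$m$ analogue of the identity $(I-vv^\tr)\nabla^2V\,vv^\tr=0$ used before. What remains is exactly $\nabla_y^2L(x^*;x^*)\,P=0$, and since that Hessian is positive definite, $P=\Phi_x(x^*)=0$.

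The routine-but-heavy obstacle is the combinatorial bookkeeping over the $2^m-1$ subsets $s\in S$: one must check that the commutation identities and the eigenvalue computation survive summation over all $s$, which is exactly where the argument becomes ``technically lengthier.'' The one genuinely delicate point is regularity of the individual projectors $\Pi_s(x)$ for proper subsets $s$, which requires the $m$ lowest eigenvalues to be simple, not merely separated from the rest by a gap; otherwise only the total projector $\sum_{i\le m}v_i v_i^\tr$ is smooth and the individual $v_i(x)$ need not be. I would therefore carry simplicity of $\lambda_1,\dots,\lambda_m$ as a standing assumption, after which the four parts reproduce the index-$1$ argument line for line.
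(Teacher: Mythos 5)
Your proposal is correct and is essentially the proof the paper intends: the paper explicitly skips this proof as ``similar to the proof of Theorem~\ref{thm:main} but technically lengthier,'' and your four parts reproduce that argument line for line, with the spectral projectors $\Pi_s$ playing the role of $vv^\tr$ --- in particular $\Pi_s^\perp H(x^*)\Pi_s=\Pi_s H(x^*)\Pi_s^\perp=0$ replacing the identity $(I-vv^\tr)\nabla^2V(x^*)\,vv^\tr=0$, and the linear equation $\nabla_y^2L(x^*;x^*,\alpha,\beta)\,\Phi_x(x^*)=0$ with a positive-definite Hessian closing Part~(iv) exactly as in the index-$1$ case. Your two flagged caveats are in fact needed repairs to the theorem as literally stated: positive definiteness of $\nabla_y^2L(x^*;x^*,\alpha,\beta)$ requires the per-direction condition $\sum_{s\ni j}(\alpha_s+\beta_s)>1$ for every $j\le m$, and the stated aggregate hypothesis $\sum_{s\in S}(\alpha_s+\beta_s)>1$ is insufficient when $m>1$ (e.g.\ for $m=2$ take $\alpha_{\{1\}}=2$ and all other coefficients zero, which leaves the $v_2$-eigenvalue of the Hessian equal to $\lambda_2<0$); likewise, continuous differentiability of the individual projectors $\Pi_s(x)$ for proper subsets $s$ genuinely requires simplicity of $\lambda_1,\dots,\lambda_m$, since the gap $\lambda_m<0<\lambda_{m+1}$ alone only makes the total projector $\sum_{i\le m}v_i(x)v_i(x)^\tr$ smooth.
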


As a consequence of the above theorem,  the following iterative scheme
\[
\begin{cases}
x^{(k+1)}=\underset{y}\argmin \  L(y;x^{(k)},\alpha,\beta)\\
[v^{(k+1)}_1,\ldots,v^{(k+1)}_m]=\underset{U=[u_1,\ldots,u_m],\,U^\tr U=I}\argmin\mathrm{trace}\ U^\tr\nabla^2V(x^{(k)})U
\end{cases}
\]
converges to the index-$m$ saddle point $x^*$ quadratically if the starting point $x^{(0)}$ is close enough to $x^*$.

\section{Examples}
\label{sec:example}

\subsection{A simple two dimensional  example}
Firstly, we review a two dimensional  example in \cite{GAD2011}.
$$
V(x,y)=\frac{1}{4}(x^2-1)^2+\frac{1}{2}\mu y^2
$$
where $\mu $ is  a positive parameter. 
For this system, $x_\pm =(\pm 1,0)$ are two local minima  and $(0,0)$ is the index-1 saddle point. The eigenvalues and eigenvectors of the Hessian matrix at a point $(x,y)$ are
\begin{align*}
  \lambda_1 &= 3x^2-1 \ \mbox{ and } \ v_1=(1,0),\\
  \lambda_2 &= \mu \ \mbox{ and } \ v_2= (0,1).
\end{align*}
Note that when $|x|\leq \sqrt{\frac{1+\mu}{3}} $, $\lambda_1\leq 0 <\lambda_2$.
The eigenvector corresponding to the smallest eigenvalue 
 is $v_1$ if $|x|<\sqrt{\frac{1+\mu}{3}} $ and becomes $v_2$ if $|x|>\sqrt{\frac{1+\mu}{3}} $.

Suppose that at iteration $k$, the position is $(x_k,y_k)$.
Then,  the modified objective functions  $V+W_1$  and $V+W_2$ in the iterative minimization  method is defined as follows
\begin{equation*}
\begin{cases} 
  V+W_1^{(k)}= -\frac{1}{4}(x^2-1)^2+\frac{1}{2}\mu y^2+\frac{1}{2}(x_k^2-1)^2  
 & \text{if $|x|<\sqrt{\frac{1+\mu}{3}}$}, 
\\
  V+W_1^{(k)}= \frac{1}{4}(x^2-1)^2-\frac{1}{2}\mu y^2+\mu y_k^2  
   &\text{if $|x|>\sqrt{\frac{1+\mu}{3}}$.}
\end{cases}
\end{equation*}

\begin{equation*}
\begin{cases} 
  V+W_2^{(k)}= -\frac{1}{4}(x^2-1)^2+\frac{1}{2}\mu y^2- \mu y_k^2   & \text{if $|x| \leq \sqrt{\frac{1+\mu}{3}}$}, 
\\
  V+W_2^{(k)}= \frac{1}{4}(x^2-1)^2-\frac{1}{2}\mu y^2-   \frac{1}{2}(x_k^2-1)^2  
   &\text{if $|x|>\sqrt{\frac{1+\mu}{3}}$.}
\end{cases}
\end{equation*}
These are piecewise continuous function and  the difference of $W_1$ and $W_2$ is only  a constant. 
In the domain where $|x|<\min \left (1, \sqrt{\frac{1+\mu}{3}} \right)$,  the original saddle $(0,0)$ is the unique  interior  minimal point.
Outside of this domain, the modified function $V+W_1$ or $V+W_2$ has no lower bound. 
 So,  the iterative minimizing method  works only when  the initial guess satisfies $|x|<\sqrt{\frac{1+\mu}{3}}$.

\subsection{The three-hole example}
In the second example, we study a two dimensional energy function in \cite{mfpt2003,tpt2006}
 where  there are three local minima.
The formula of this energy function is 
\[ 
   V(x,y)=3e^{-x^2-(y-\frac{1}{3})^2}-3e^{-x^2-(y-\frac{5}{3})^2}-5e^{(x-1)^2-y^2}
   -5e^{(x+1)^2-y^2}+0.2x^4+0.2\left(y-\frac{1}{3}\right)^4.
\]
Refer to Figure  \ref{pic:three-hole} for the contour plot.
\begin{figure}[htbp]
  \centering
  \includegraphics[width=.76\textwidth]{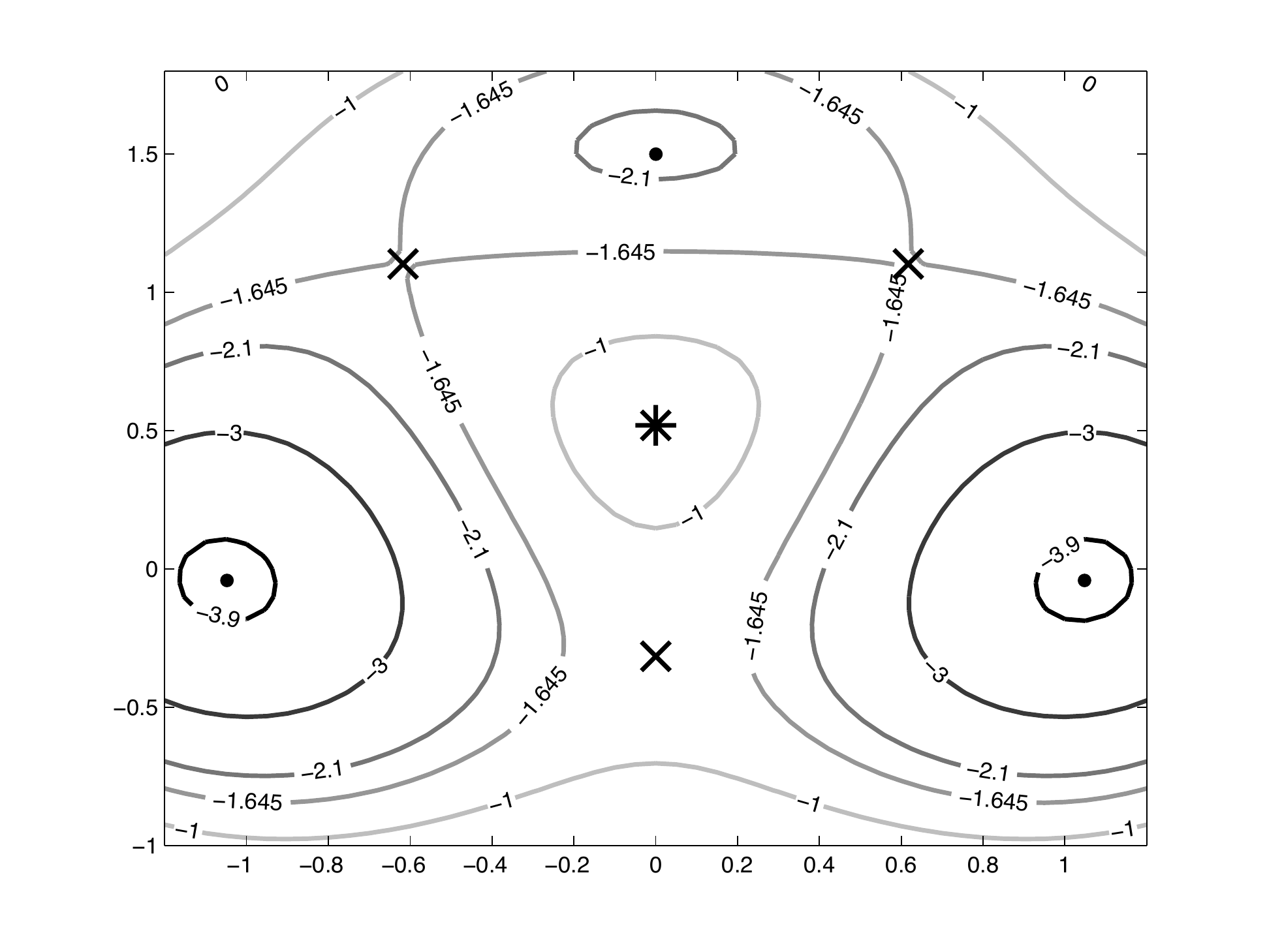}
  \caption{Three-hole potential: three minima (black dots) approximately at $(\pm 1,0)$ and $ (0,1.5)$, a maximum (``$*$'') at $(0,0.5)$ and three saddle points 
  (``$\times$'') at $(0,-0.31582)$ and $(\pm 0.61727,1.10273)$. }\label{pic:three-hole}
\end{figure}

Let $\mathrm{SP1}=(0,-0.31582)$ and $\mathrm{SP2}=(-0.61727,1.10273)$ be the two saddles of interests. 
We first demonstrate the quadratic convergence when the initial guess is near the saddle points.
%The initial guesses are set to be the branching points where the smallest eigenvalue is zero.
%We randomly select  initial guesses close to the boundary of the basin of attraction of SP2 in which every point contains one direction
%of negative eigenvector.
%on the circle centered at the saddle  with $0.1$ radius. 
 Table \ref{tab:hole-1} shows  the  errors at each iterations, from which  it is observed 
that the iterative scheme has the quadratic convergence rate. 
\begin{table}[htbp]
\begin{center}
\begin{tabular}{|c|c|c|c||c|c|c|}
  %\hline
  % after \\: \hline or \cline{col1-col2} \cline{col3-col4} ...
  %iteration &   \multicolumn{3}{|c|}{true errors }   \\
  \hline
 Iter  & $(2,0)$      & $(0,2)$      & $(1,1)$                & $(2,0)$      & $(0,2)$      & $(1,1)$ \\\hline
 $1$   & 5.042e-002   & 2.979e-002   & 2.924e-002             & 1.672e-002   & 3.024e-002   & 4.342e-002 \\
 $2$   & 1.376e-005   & 5.470e-004   & 1.671e-004             & 9.327e-006   & 3.445e-004   & 3.194e-004 \\
 $3$   & 7.245e-011   & 2.573e-008   & 2.434e-008             & 2.527e-011   & 1.116e-008   & 1.233e-008 \\
 $4$   & 5.023e-016   & 5.551e-016   & 3.951e-016             & 2.482e-016   & 3.886e-016   & 4.965e-016 \\\hline
\end{tabular}
\end{center}
\caption{Errors of 6  runs with  random initial guesses circled at the target saddle point with 0.2 radius.
 Different values of  $(\alpha,\beta)$  for the modified  
objective functions in subproblem  are shown in the brackets.The left 3 runs converge to SP1 and the right 3 runs converge to SP2.}
\label{tab:hole-1}
\end{table}

If the initial guess is close to the local minima of $V$, then  the Hessian of $V$  at  the initial point
is positive-definite, while the modified objective function has one negative eigenvalue
and has no lower bound. As discussed in \S\ref{ssec:subsolver},  we  set a maximum step size $0.25$ both in $x$ and $y$ direction at each iteration
 to maintain the stability at this initial stage.
 Equivalently, it is to   solve the subproblem in the square box of size $0.25$.
 Table \ref{tab:hole-2} shows the result for initial points which are $0.1$ away from $(-1,0)$,
 one of the two deep minima. Some runs converge to SP2 and others converge to SP1.
It is observed that at the first few steps,  the decreasing of the errors is slow but 
when it approaches to the saddle, the smallest eigenvalue of the Hessian becomes negative,
and it follows that  the iterative minimization method starts to show quadratic convergence.
\begin{table}
\begin{tabular}{|c|c|c|c||c|c|c|}\hline % after \\: \hline or \cline{col1-col2} \cline{col3-col4} ...
 Iter  & $(2,0)$      & $(0,2)$      & $(1,1)$ & $(2,0)$      & $(0,2)$      & $(1,1)$ \\\hline
 $1$   & 8.434e-001   & 8.568e-001   & 8.496e-001             & 1.160e+000   & 1.170e+000   & 1.262e+000 \\
 $2$   & 6.891e-001   & 7.026e-001   & 6.954e-001             & 9.922e-001   & 9.970e-001   & 1.077e+000 \\
 $3$   & 5.731e-001   & 5.858e-001   & 5.790e-001             & 8.512e-001   & 8.537e-001   & 9.464e-001 \\
 $4$   & 5.216e-001   & 5.317e-001   & 5.263e-001             & 6.983e-001   & 6.939e-001   & 8.020e-001 \\
 $5$   & 3.862e-001   & 3.848e-001   & 3.841e-001             & 5.273e-001   & 5.028e-001   & 6.397e-001 \\
 $6$   & 1.776e-001   & 1.740e-001   & 1.742e-001             & 3.391e-001   & 3.030e-001   & 4.542e-001 \\
 $7$   & 1.983e-002   & 4.266e-002   & 1.987e-002             & 1.511e-001   & 1.291e-001   & 2.569e-001 \\
 $8$   & 7.314e-007   & 3.343e-004   & 2.834e-004             & 2.975e-002   & 1.207e-002   & 8.031e-002 \\
 $9$   & 3.756e-012   & 9.654e-009   & 4.088e-008             & 2.093e-006   & 2.879e-005   & 3.919e-003 \\
 $10$  &              & 6.810e-016   & 7.773e-015             & 1.734e-015   & 1.912e-011   & 7.345e-006 \\
 $11$  &              &              &                        &              & 3.140e-016   & 2.745e-011 \\\hline
\end{tabular}
\caption{Errors of 6 random runs with initial guesses circled at a local minimum  $(-1,0)$ with $0.1$ radius.
The  three runs  on the left  columns converge to SP1 and the  other three runs  on the right columns converge to SP2 respectively.}
\label{tab:hole-2}
\end{table}

We also  test the effects of the inexact solution of the subproblem. In solving the subproblem by 
the conjugate gradient  method, we  only perform three steps of 
conjugate gradient search.  The initial guesses
are chosen on the circle centered at the saddle points with $0.2$ radius. 
The results are shown in Table \ref{tab:hole-3} with two different parameter sets $(\alpha=2, \beta=0$ and $\alpha=0,\beta=2)$.
In comparison to the case of exact solution for subproblem in Table \ref{tab:hole-1}, 
the efficiency of the algorithm  is  not affected much 
and the local convergence rate is still quite close to the second order.

\begin{table}
\begin{tabular}{|c|c|c||c|c|c|c|}\hline % after \\: \hline or \cline{col1-col2} \cline{col3-col4} ...
 Iter  & $(2,0)$      &$(0,2)$       & $(2,0)$      & $(0,2)$ \\\hline
 $1$   & 4.476e-02   & 2.723e-02   & 5.096e-02   & 1.877e-02  \\
 $2$   & 1.262e-04   & 3.256e-04   & 7.756e-05   & 8.386e-04  \\
 $3$   & 2.863e-08   & 6.047e-06   & 1.270e-09   & 4.176e-05 \\
 $4$   & 5.317e-13   & 3.316e-10   & 7.830e-13   & 1.8827e-09 \\
 $5$   &              & 7.325e-13   &              & 4.3853e-11  \\\hline
\end{tabular}
\caption{Errors of 4  runs with random initial guesses circled at the target saddle points with $0.2$ radius. Use three-step nonlinear CG method to solve the subproblem of minimization inexactly. 
The left 2 runs converge to SP1 and the right 2 runs converge to SP2 respectively.
}
\label{tab:hole-3}
\end{table}

In the end, for this 2-D example,  we plot the domain of attraction for our  algorithm to compare with the performance of  the  Newton method.
Note that our purpose here is to look for index-1 saddle points.  We choose initial guesses from   $50\times 50$ grid points uniformly
 in the rectangular region $[-1.5, 1.5]\times[-1.5,2.0]$. These points  are labelled in Figure \ref{pic:da} by three different  marks in three colours,  according to  which saddle point (shown in cross sign) they converge to. 
 The grid point  is left blank in case of no convergence to any  saddle.  
The figure demonstrates that our IMF scheme has a larger (and continuous) domain of attraction 
for each saddle point. 
\begin{figure} [thbp]
\centering
\begin{subfigure}[b]{0.49\textwidth}
                \centering
                \includegraphics[width=\textwidth]{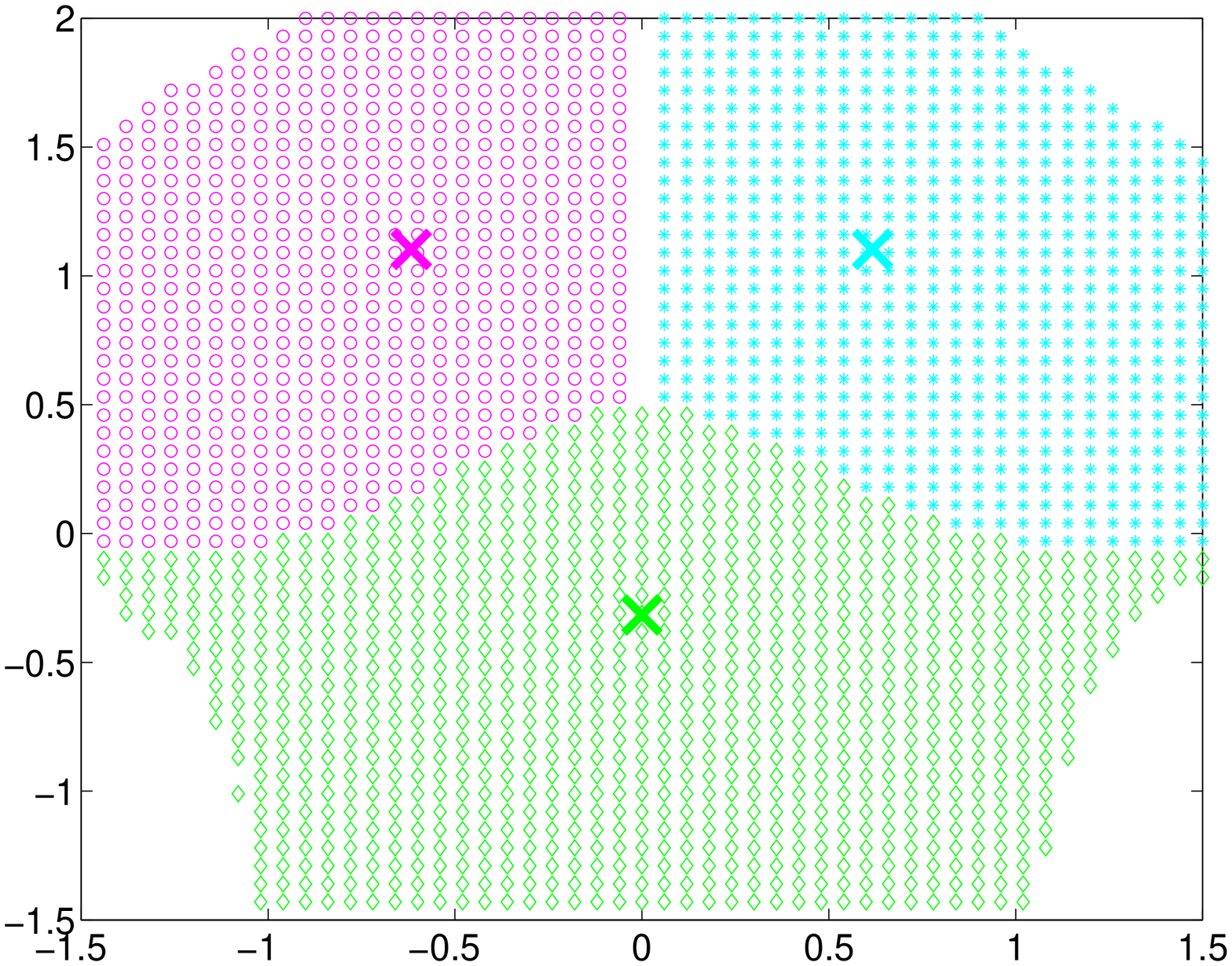}
                \caption{IMF}
                \label{fig:min1}
        \end{subfigure}%
        \hfill 
        \begin{subfigure}[b]{0.49\textwidth}
                \centering
                \includegraphics[width=\textwidth]{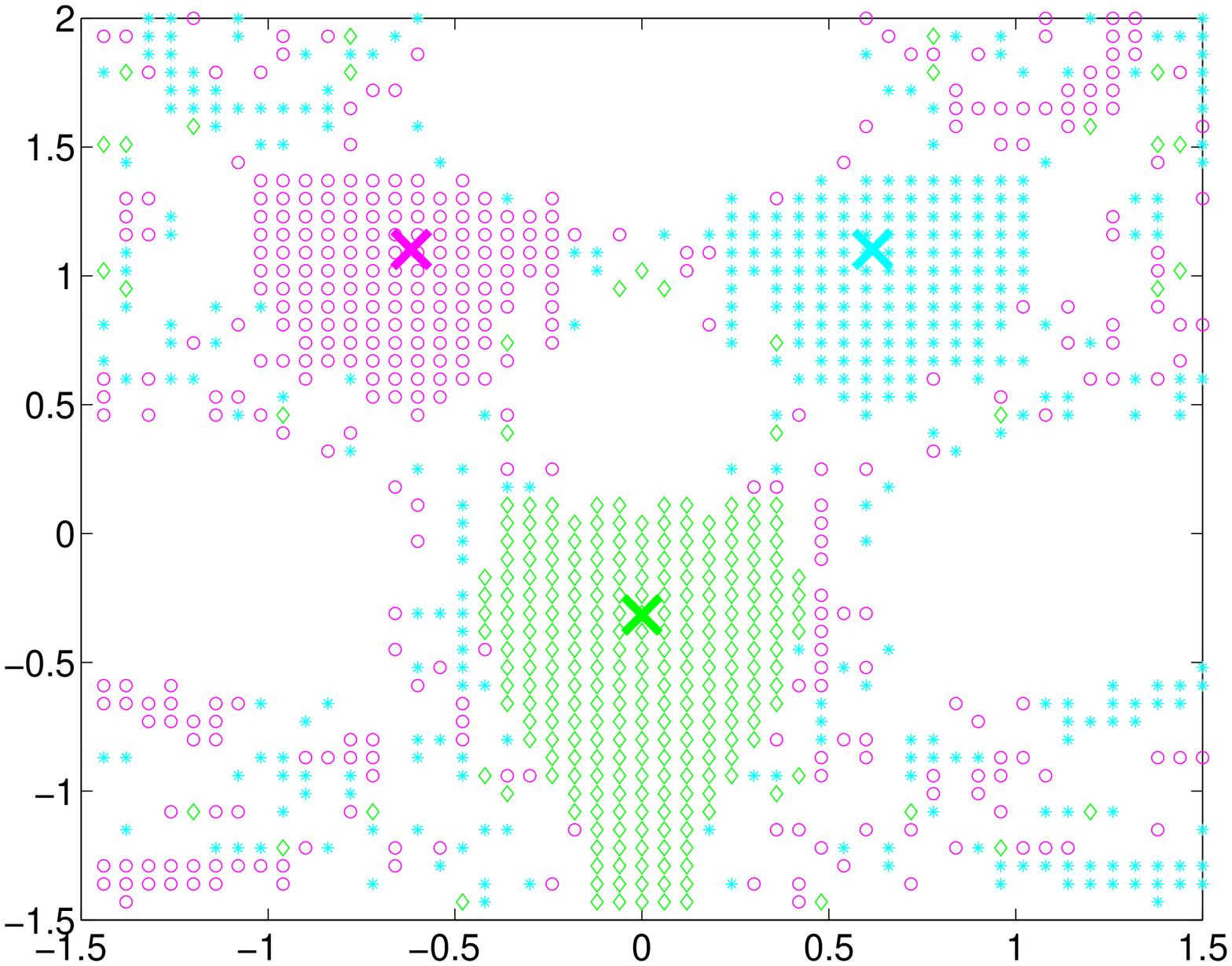}
                \caption{Newton}
                \label{fig:sp1}
        \end{subfigure}%
    \caption{ Comparison of domain of attractions for saddle search problem in our scheme and Newton scheme.}\label{pic:da}
\end{figure}

\subsection{A quadratic function on the sphere  $S^2$}
\label{ssec:s2}
We illustrate the proposal in Section \ref{sec:smfd} for the constrained  problem by considering 
a simple example of  $\mfd=S^2$ embedded in $\Real^3$ on which  a quadratic energy function $V(x_1,x_2,x_3)=x_1^2 + 2 x_2^2 + 3x_3^2$ is defined. 
The constraint  is   that $x_1^2+x_2^2+x_3^2=1$. It is easy to verify that  saddle points  $(0,\pm 1, 0)$,
 minimizers $(\pm 1, 0,0)$ and  maximizers $(0,0,\pm 1)$  of $V$ are  generated due to this constraint.

As mentioned in  Section \ref{sec:smfd},  for a given $x\in S^2$ and $v\in \ts_x(S^2)$,  the projection of a point $y$ on 
$S^2$ is associated with a geodesic curve $\xi(s)$. Here the geodesic $\xi$ is simply the great circle passing the point $x$ along the direction $v$.
Thus $\xi$ can be written in the parameterized form $\xi(\theta)=x \cos\theta+v\sin\theta$.
It follows that the geodesic distance 
$\mbox{dist} (\xi(\theta),y)=\arccos\inpd{\xi(\theta)}{y}$  
achieves the minimum at   $\theta=\theta_y$, where $\theta_y $ is equal to $ \arctan\frac{\inpd{v}{y} }{\inpd{x}{y}} $ or 
$\arctan\frac{\inpd{v}{y} }{\inpd{x}{y}}+ \pi $ depending on which value gives smaller distance.
 Then the projection point of $y$ is
$ 
x\cos\theta_y+v\sin\theta_y.
$
So we have   $W_2(y) = -2V( x\cos\theta_y+v\sin\theta_y)$ for this $S^2$ example.

Next we also derive $W_1$ expression for this $S^2$ case.
Since the tangent space $\ts_x(S^2)$ is two dimensional, 
the orthogonal complement of $v$ in this space is spanned by just a single vector, denoted as $\tilde{v}$.
It follows then that $W_1(y)=-2V(y)+2V(x\cos\tilde{\theta}_y+v\sin\tilde{\theta}_y)$   where 
$ \tilde{\theta}_y$ is defined likely as  $\theta_y$ by substituting $v$ by $\tilde{v}$.

The numerical results based on the construction of the above $W_1$ and $W_2$ are
presented in  Table \ref{tab:s2}. The initial guess is  $0.1$ distant  to  the minimum point $(1,0,0)$.
The numerical data of the errors between the solution and the true saddle point in this table again confirm  the quadratic convergence rate.
We remark that it is important to use  the projection associated with the geodesic curve
in the construction of   $W_1$ and $W_2$.
One alternative idea might  use the projection  in  the Euclidean space $\Real^3$ as if no constraints, then pullback to $S^2$.
For instance,  one may use  the following  $ W_2 (y) = -2V\left( R_x( v^\tr (y-x) v)\right ),$
where $R_x(u)=\frac{x+u}{\|x+u\|}$ is a  retraction 
mapping  the tangent  space $\ts_x$ to  the sphere $S^2$.  However,  our numerical result for the 
same example here 
shows that this choice  
gives only a linear convergence rate.  The  missing curvature information of the manifold in this naive orthogonal projection approach 
seems to be the reason of lowering the convergence order.

\begin{table}[htbp]
\begin{tabular}{|c|c|c|c|c|c|}
\hline
Iter & $1$& $2$& $3$& $4$  & $5$ \\
   \hline
$V+W_1$ & 1.3900e+00 & 
   2.3217e-01 &
  1.4234e-03 &
  2.0994e-08&
 1.7684e-15
   \\
$V+W_2$ & 1.2902e+00 & 
   5.6506e-01 &
   6.5923e-02 &
   8.7484e-06&
   1.2433e-16
   \\
   \hline
\end{tabular}
\caption{Errors of  $S^2$ example}
\label{tab:s2}
\end{table}

\subsection{An atomic model system}
This is an application of our method to the celebrated test problem of a $7$ atoms island on the (111) surface of an FCC metal \cite{Henkelman2000}. 
In this example 
the structure has a $6$-layer slab, each of which contains 56 atoms, and 7 atoms at the top of the slabs.  The bottom three layers in the slab are frozen. 
 There are $56\times 3+7=175$ atoms are free to move.
All the atoms in this simulation are identical. 

The interaction between the atoms is the simple pairwise additive Morse potential
\[
V(R)=A[e^{-2a(R-R_0)}-2e^{-a(R-R_0)}]
\]
with parameters chosen to reproduce diffusion barriers on Pt surfaces
($A=0.7102 \text{eV}$, $ a=1.6047{\AA}^{-1}$, $R_0=2.8970 {\AA}$).
This potential is cut and shifted by V($R_C$) where $R_C = 9.5 {\AA}$~is the cut-off distance. The minimum energy lattice constant $2.74412 {\AA}$ is used.

We show two local minima in Figure \ref{pic:175} as well as two saddle points.
All saddle points lead from the close packed heptamer (shown in red) to some adjacent state. 
\begin{figure} [thbp]
\centering
\begin{subfigure}[b]{0.45\textwidth}
                \centering
                \includegraphics[width=\textwidth]{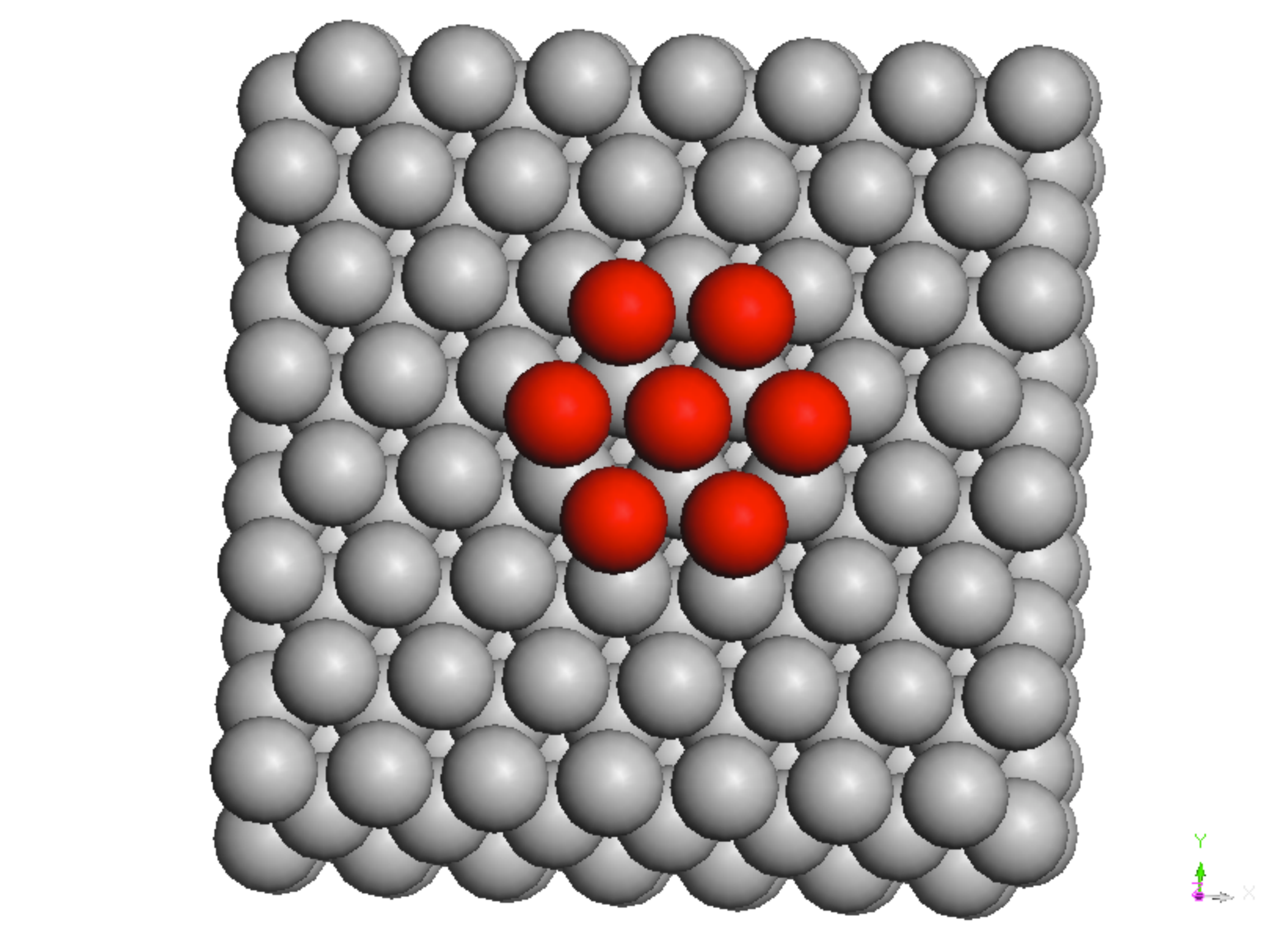}
                \caption{MIN1}
                \label{fig:min1}
        \end{subfigure}%
        \hfill 
        \begin{subfigure}[b]{0.45\textwidth}
                \centering
                \includegraphics[width=\textwidth]{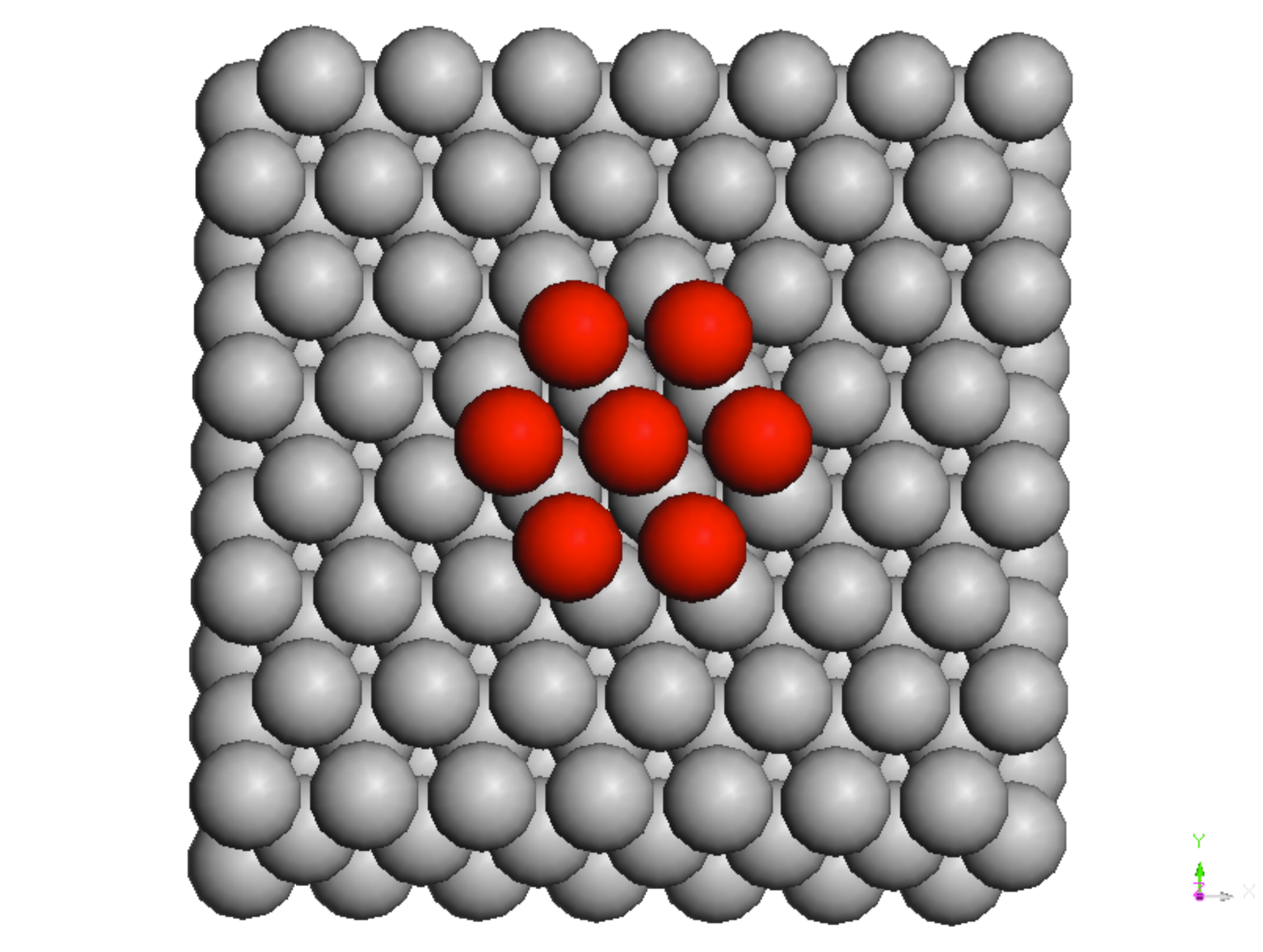}
                \caption{SP1}
                \label{fig:sp1}
        \end{subfigure}%
        \\
        
\begin{subfigure}[b]{0.45\textwidth}
                \centering
                \includegraphics[width=\textwidth]{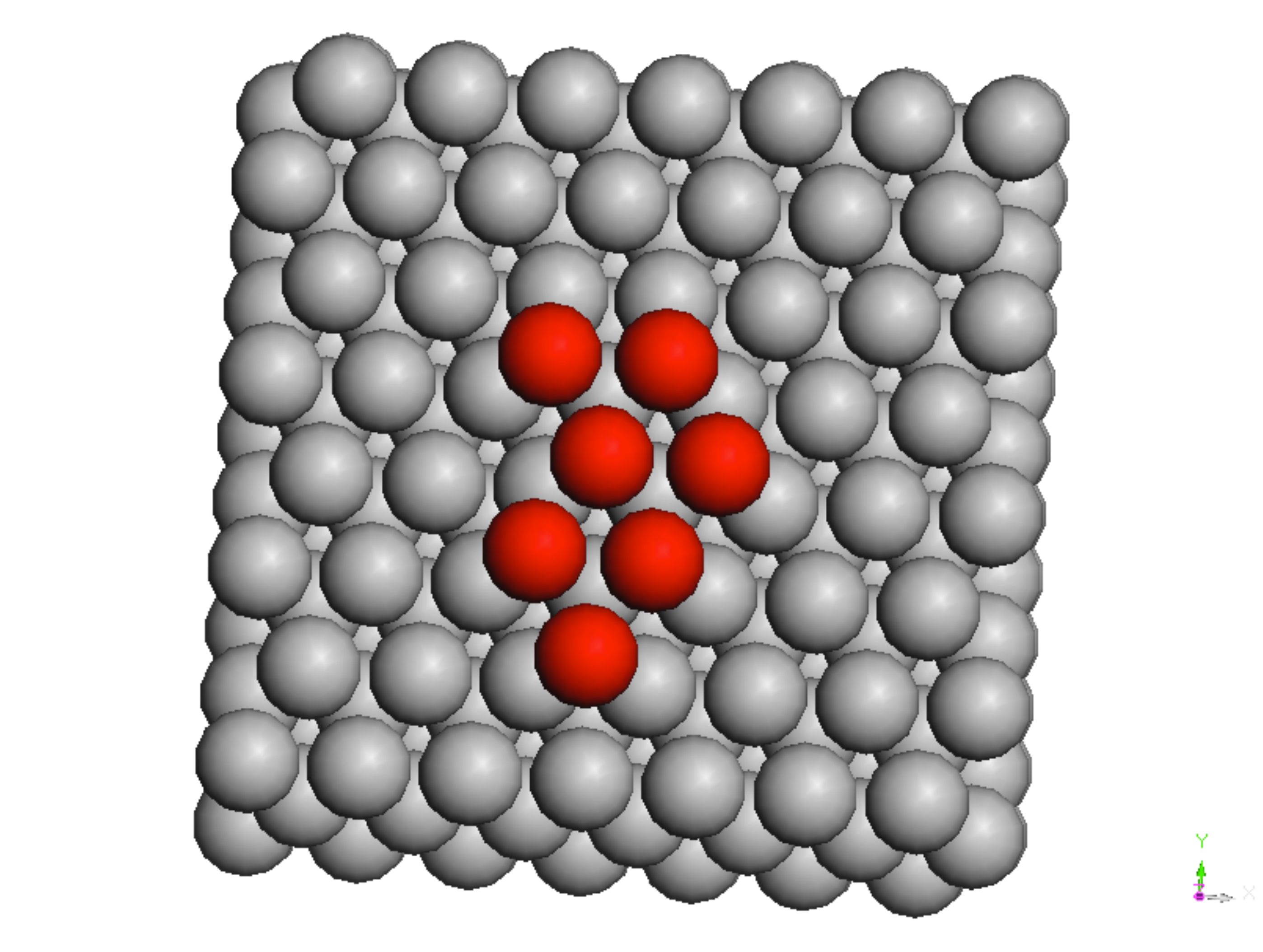}
                \caption{MIN2}
                \label{fig:min}
        \end{subfigure}%
        \hfill
        \begin{subfigure}[b]{0.45\textwidth}
                \centering
                \includegraphics[width=\textwidth]{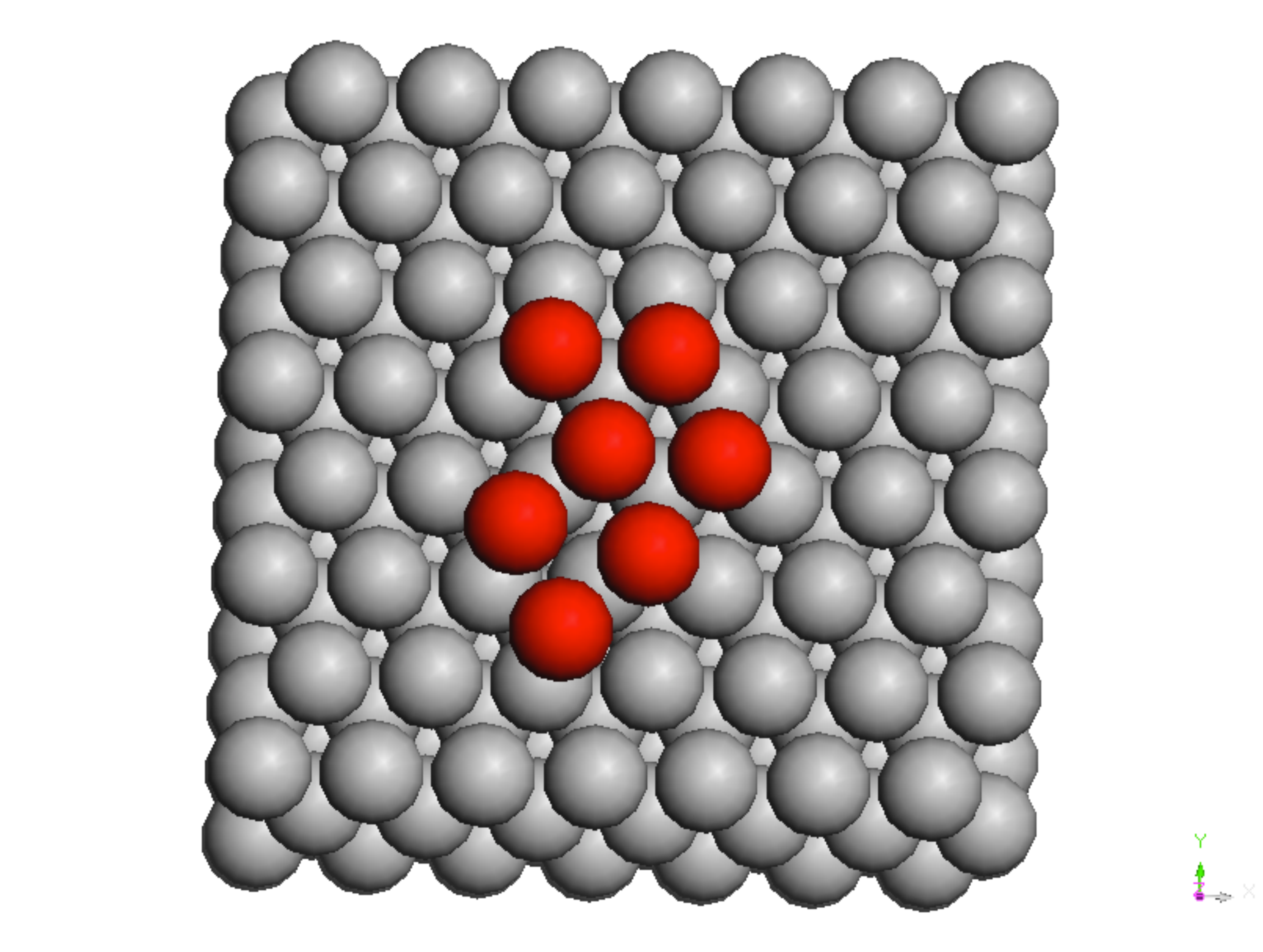}
                \caption{SP2}
                \label{fig:sp2}
        \end{subfigure}%
        
\caption{ The 7-atom island model.  Two local minima and two saddle points are shown and denoted as 
MIN1, SP1, MIN2, SP2, respectively.}\label{pic:175}
\end{figure}
We applied  our  iterative minimization method to this large-scale system.  The initial guess of position is chosen near the minima. And the initial direction  is randomly selected.  The eigenvector corresponding to the minimum eigenvalue is solved by 
an efficient  LOR method proposed  in \cite{LOR2013}.
The maximum step size in the subproblem for the position is set as 0.2. In this implementation, we used nonlinear conjugate-gradient method with
the tolerance set to $10^{-16}$ so that the subproblem is solved accurately enough.
%\com{which optimization method ? CG ? exact or inexact solution ?  how  inexact ? more details.} 
The accuracy of  the each entry in the force at the saddle points  is between $10^{-10}$ and $10^{-11}$.
The error is defined as the Euclidean distance from the current position to the saddle.

The  numerical results  are presented in Table \ref{tab:atom}.
Since our initial guess is very close to the local minimum, 
it is not surprising that  the first several iteration steps 
have a slow decay of the errors 
since the effect of following the  eigenvector of the smallest eigenvalue  has not kicked in.
The fast convergence rate is observed  as expected in the second part.
In this example, the exact solver for each subproblem was used in Table \ref{tab:atom}, thus 
the computational  overhead  is large, compared with other algorithms requiring no subproblems to solve.
We also  introduced a simple inexact solver by limiting only two iterations of CG for subproblem. The resulting 
convergence rate   deteriorates  due to inexact solver and   linear convergence is observed.
The right  balance between the fast  convergence rate  and the large  computational overhead requires a careful design of the
tolerance in the inexact solver.

\begin{table}[htbp]
\begin{tabular}{|c|c|c|c||c|c|c|}\hline % after \\: \hline or \cline{col1-col2} \cline{col3-col4} ...
 Iter  & $V+W_1$      & $V+W_2$      & $\frac{2V+W_1+W_2}{2}$ & $V+W_1$      & $V+W_2$      & $\frac{2V+W_1+W_2}{2}$ \\\hline
 $1$   & 2.014e+000   & 1.832e+000   & 1.803e+000             & 1.633e+000   & 1.695e+000   & 1.521e+000 \\
 $2$   & 1.837e+000   & 1.695e+000   & 1.760e+000             & 1.599e+000   & 1.575e+000   & 1.488e+000 \\
 $3$   & 1.729e+000   & 1.575e+000   & 1.693e+000             & 1.535e+000   & 1.314e+000   & 1.433e+000 \\
 $4$   & 1.621e+000   & 1.315e+000   & 1.603e+000             & 1.446e+000   & 8.668e-001   & 1.336e+000 \\
 $5$   & 1.454e+000   & 8.668e-001   & 1.536e+000             & 1.312e+000   & 4.061e-001   & 1.167e+000 \\
 $6$   & 1.345e+000   & 4.496e-001   & 1.420e+000             & 1.114e+000   & 2.897e-001   & 9.808e-001 \\
 $7$   & 1.129e+000   & 1.605e-001   & 1.205e+000             & 9.250e-001   & 1.875e-001   & 7.974e-001 \\
 $8$   & 6.903e-001   & 3.335e-001   & 1.009e+000             & 7.405e-001   & 1.072e-001   & 6.113e-001 \\
 $9$   & 3.189e-001   & 8.653e-002   & 8.068e-001             & 5.605e-001   & 5.076e-002   & 4.407e-001 \\
 $10$  & 2.552e-001   & 9.040e-003   & 6.063e-001             & 3.855e-001   & 5.951e-003   & 2.679e-001 \\
 $11$  & 1.297e-001   & 3.398e-005   & 4.252e-001             & 2.016e-001   & 8.782e-006   & 1.058e-001 \\
 $12$  & 1.170e-002   & 6.333e-008   & 2.526e-001             & 3.005e-002   & 1.132e-007   & 1.903e-002 \\
 $13$  & 1.536e-004   & 2.641e-010   & 1.011e-001             & 5.290e-004   & 1.579e-009   & 5.277e-004 \\
 $14$  & 9.017e-008   &              & 1.141e-002             & 1.367e-008   &              & 8.758e-007 \\
 $15$  & 3.907e-010   &              & 1.487e-004             & 1.135e-009   &              & 3.347e-008 \\
 $16$  &              &              &7.792e-008              &              &              &            \\
  \hline
\end{tabular}
\caption{Errors of 6  runs with random initial guesses near the local minima as well as with different additional potentials.
The left 3 runs start from the initial guesses near MIN1 and converge to SP1.
The right 3 runs start from the initial guesses near MIN2 and converge to SP2, respectively.}
\label{tab:atom}
\end{table}

\section{Concluding remarks}

This paper presents a new formulation of iterative minimization to the saddle search problem.
In this formulation, the problem is solved by iteratively solving a sequence of 
minimization subproblems. 
At each  iteration, 
 the rotation step of determining the softest eigenvector $v$ is followed 
by a  nonlinear optimization   for the subproblem to update the $x$ variable.
We have proved the  local quadratic convergence rate of  the  new scheme.  
This new scheme is closely connected to the gentlest  ascent dynamics (GAD) and other ``eigenvector-following''
algorithms such as the dimer method. However, our subproblem is not limited only on the direction of $v$, but includes
the information of the original energy function in all directions
to update  the $x$ variable in $\Real^d$. The  quadratic convergence rate  theoretically established here   is 
 promising for further numerical improvement in practice  and indicates  that this could be the  best rate for    ``eigenvector-following''-class algorithms. 
In a forthcoming paper, we shall address the implementation  of efficient algorithms based on this formulation.
We are also interested in the 
saddle points of the free energy  landscape in collective variables, where 
the free energy function $V$ is not known, but the force, the Hessian and even the third order perturbation  can be simultaneously
computed  from one single, but expensive,  run of constrained molecular dynamics \cite{string-collective2006}.

\section*{acknowledgment}
The work of W. Gao is supported by the National Natural Science Foundation of China under grants 91330202, Shanghai Science and Technology Development Funds 13dz2260200, 13511504300.
  X. Zhou acknowledges the  financial support of  CityU Start-Up Grant (7200301) and Hong Kong Early Career Schemes (109113). The authors would like to thank the anonymous referees for valuable suggestions.
\bibliography{./gad}

\bibliographystyle{plain} %alpha, siam

\end{document}